\newtheorem{lemma}{Lemma}
\newtheorem{theorem}{Theorem}
\newtheorem{proposition}{Proposition}
\theoremstyle{definition}}
\theoremstyle{definition}}
\theoremstyle{definition}}
\theoremstyle{definition}}
\numberwithin{lemma}{section}
\newcommand{\R}{\mathbf{R}}
\newcommand{\N}{\mathbf{N}}
\newcommand{\sech}{\textnormal{sech}}
\newcommand{\p}{\partial}
\newcommand{\supp}{\textnormal{supp }}
\title{Asymptotic Stability for KdV Solitons in Weighted Spaces via Iteration}
\author[B. Pigott]{Brian Pigott}
\address{Wake Forest University\\pigottbj@wfu.edu}
\author[S. Raynor]{Sarah Raynor}
\address{Wake Forest University\\raynorsg@wfu.edu}
\date{\today}
\subjclass{(Primary) 35Q53, 35B35 (Secondary) 37K40, 35B40, 37K45, 35Q51}
\thanks{S. Raynor would like to thank the Simons Foundation for their support during the dreation of this work.  The authors would like to thank Jeremy Marzuola for his helpful suggestions on a draft of this work.}
\begin{document}

\begin{abstract}
In this paper, we reconsider the well-known result of Pego-Weinstein \cite{MR1289328} that soliton solutions to the Korteweg-deVries equation are asymptotically stable in exponentially weighted spaces.  In this work, we recreate this result in the setting of modern well-posedness function spaces.  We obtain asymptotic stability in the exponentially weighted space via an iteration argument.  Our purpose here is to lay the groundwork to use the $I$-method to obtain asymptotic stability below $H^1$, which will be done in a second, forthcoming paper \cite{PR}.  This will be possible because the exponential approach rate obtained here will defeat the polynomial loss in traditional applications of the $I$-method \cite{MR1995945}, \cite{MR1951312}, \cite{pigottorb}.
\end{abstract}

\maketitle

\section{Introduction}
We consider solutions to the Korteweg-de Vries equation:
$$
u_t+u_{xxx}+\p_x(u^2)=0,
$$
which is a well-known nonlinear dispersive partial differential equation modelling the behavior of water waves in a long, narrow, shallow canal.  Of particular interest are soliton solutions to this equation, which are special travelling wave solutions of the form
\begin{equation}\label{soliton}
Q_{c,x_0}(x,t)=\psi_c(x-ct-x_0)=\frac{3c}{2}\sech^2(\frac{\sqrt{c}}{2}(x-ct-x_0)).
\end{equation}
The stability of these solitons has been an area of intense study for many years.  One might first be interested in the orbital stability of the soliton.  That is, if, at $t=0$, $u(x,0)-\psi_c(x)$ is small in an appropriate norm, then, for all time there is some $x_0(t)$ so that \\ $u(x,t)-\psi_c(x-x_0(t))$ remains small.  The study of orbital stability in the energy space $H^1$ began with with Benjamin \cite{MR0338584} and Bona \cite{MR0386438}, continuing with Weinstein \cite{MR820338}.  Merle and Vega established the orbital and asymptotic stability of KdV solitons in $L^2$ \cite{MR1949297}.  One can also study the possibility of orbital stability of solitons in $H^s$ for $s$ not an integer, and in \cite{MR1995945}, \cite{pigottorb} it was shown that, for $0<s<1$, the possible orbital instability of the solitons is at most polynomial in time.

Also of interest is the concept of asymptotic stability, meaning that there exist $c_+$ and $x_+$ so that, in some appropriate sense, $u(x,t)-\psi_{c_+}(x-c_+t-x_+)$ goes to zero as time goes to positive infinity.  Asymptotic stability for the Korteweg-deVries equation was first studied by Pego and Weinstein in \cite{MR1289328}.  In that paper, the authors considered the behavior of solutions to KdV in the weighted space $H^1_a=\{f|\|e^{ax}f(x)\|_{H^1} < + \infty\}$, for appropriate choice of $a$.  In that setting, they were able to conclude that solitons are asymptotically stable and, in fact, converge exponentially to the limiting soliton.   Asymptotic stability in the space $H^1$ was established by Martel and Merle in \cite{MR2109467,MR1826966}, and in $L^2$ by Merle and Vega via the Miura transform \cite{MR1949297}.

In this paper, we reconsider the result of Pego and Weinstein.  We establish local well-posedness for the exponentially weighted soliton perturbation in a space $X^{1,1/2,1}$ which embeds into the Bourgain space $X^1_b$, partially following the local well-posedness work of Molinet and Ribaud \cite{MR1889080} on dispersive-dissipative equations; see also \cite{MR1918236}, \cite{MR2514729}.  We then run an iteration scheme to establish global control of the perturbation in $H^1$ and the exponentially weighted space $H^1_a$, concluding that the soliton is exponentially asymptotically stable in $H^1_a$.  The purpose of this work is to modernize the techniques used in Pego-Weinstein so that the result can be used in concert with other modern techniques.  In particular, in a forthcoming work we will combine this argument with the $I$-method to obtain asymptotic stability of solitons in the space $H^s_a$.

In previous work of Colliander et al.\cite{MR1951312}, Raynor and Staffilani \cite{MR1995945}, and Pigott \cite{pigottorb}, the question of orbital stability of solitons in $H^s$ for $0<s<1$ has been considered, for the nonlinear Schr\"odinger, KdV, and generalized KdV equations respectively.  In each case, it was found that there was at most a polynomial instability in the orbit.  That is, if $\|u_0-\psi_c\|_{H^s}$ is sufficiently small, then for some long time interval and some power $p$ depending only on $s$, $\inf_{x_0\in\R}\|u(t,\cdot)-\psi_c(\cdot-x_0)\|_{H^s} \lesssim Ct^p$.  This possibility of polynomial growth in the $H^s$ error is believed to be an artifact of the technique; whenever estimates are done with the $I$-method, there is some small error on each time step which grows polynomially when iterated.  Our purpose in these papers is to obviate that error.  When working in the Pego-Weinstein weighted spaces, one gains an exponential decay in the error as the solution evolves.  Therefore, one can hope to defeat the polynomial error and obtain a true stability result in $H^s$ for $s<1$, which will be a substantial improvement in the current state of the art.  However, working with both the dissipative spectral structure of the weighted spaces and the delicate harmonic analysis required for the $I$-method turns out to be rather technical.  For that reason, we have split the work in two.  In this paper, we update the well-posedness technology necessary to use both methods, and then demonstrate how this can be utilized by reproducing the well-known $H^1$ stability result.  The result below $H^1$ will appear in a subsequent paper \cite{PR}.

The paper is organized as follows: In section 2, we will set up our notation and establish basic results.  In section 3, we will establish the necessary local well-posedness.  In section 4, we will run the iteration scheme and establish the main result of the paper.

\section{Notation and Basic Results}

Consider a solution $u(x,t)$ to the Cauchy problem:
\begin{align}
u_t+u_{xxx}+\p_x(u^2) & = 0& & \forall x \in \R \ \ \forall t > 0 \label{KdV}\\
u(x,0)& = u_0(x) &  & \forall x \in \R. \notag
\end{align}
Let $c > 0$ and consider the function $\psi_c$ which is the unique even, exponentially decaying solution to the soliton equation
$$-c\psi_c'+\psi_c'''+(\psi^2)'=0 .$$ The function $\psi_c$ takes the form given in \eqref{soliton}.  In this work we will consider initial conditions $u(x,0)=\psi_{c_0}(x)+ v_0(x)$, where $\|v_0\|_{H^1}$ is sufficiently small.  We will make the ansatz that $u(x,t)=\psi_{c(t)}(x-\int_0^tc(s)ds-\gamma(t)) + v(x-\int_0^tc(s)ds-\gamma(t),t)$ where $c(t)$ and $\gamma(t)$ will be chosen later.  From now on, we denote by $y$ the quantity $x-\int_0^tc(s)ds-\gamma(t)$.

We are also interested in controlling $v$ in the space $H^1_a,$ which has the norm \\$\|f\|_{H^1_a} = \|e^{ay}f\|_{H^1}.$  This is equivalent to controlling $w(y,t) :=e^{ay}v(y,t)$ in $H^1$.  We will choose the parameters $c(t)$ and $\gamma(t)$ so that $\|w(\cdot,t)\|_{L^2}$ is minimized at each $t$.  In order to do so, we first need to consider the difference equations satisfied by $v$ and $w$, and consider their linearizations about the soliton.
\begin{lemma} The perturbation $v$ satisfies the difference equation
\begin{equation}\label{vequation}
v_t=\p_y(-\p_y^2+c_0-2\psi_c)v + \p_y(v^2) + (\dot{\gamma}\p_y + \dot{c}\p_c)\psi_c + (\dot{\gamma}+c-c_0)\p_yv
\end{equation}
Moreover, the perturbation $w$ satisfies the difference equation
\begin{equation}\label{wequation}
\begin{aligned}
w_t &= e^{ay}\p_y(-\p_y^2+c_0-2\psi_c)e^{-ay}w + (c-c_0)(\p_y-a)w \\
&\qquad + [e^{ay}(\dot{c}\p_c + \dot{\gamma}\p_y]\psi_c + \dot{\gamma}(\p_y-a)w + e^{ay}\p_y(c-c_0 + v^2)e^{-ay}w].
\end{aligned}
\end{equation}
\end{lemma}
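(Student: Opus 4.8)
The plan is to prove both identities by direct substitution; the only structural inputs are the soliton ODE and, for the weighted equation, the fact that conjugation by the $t$-independent weight $e^{ay}$ converts $\partial_y$ into $\partial_y-a$.

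First I would insert the ansatz $u(x,t)=\psi_{c(t)}(y)+v(y,t)$, $y=x-\int_0^t c(s)\,ds-\gamma(t)$, into \eqref{KdV}. Since $y_x=1$ and $y_t=-(c(t)+\dot\gamma(t))$, the chain rule gives $u_t=\dot c\,\partial_c\psi_c-(c+\dot\gamma)(\psi_c'+v_y)+v_t$, $u_{xxx}=\psi_c'''+v_{yyy}$, and, expanding the square, $\partial_x(u^2)=(\psi_c^2)'+2\partial_y(\psi_c v)+\partial_y(v^2)$. Adding these and using the soliton equation $-c\psi_c'+\psi_c'''+(\psi_c^2)'=0$ to cancel the terms that are purely in $\psi_c$ and its derivatives leaves an identity for $v_t$ in which the only surviving soliton contributions carry the factors $\dot c$ and $\dot\gamma$. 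Finally I would add and subtract $c_0$ in the first-order coefficient: writing $c=c_0+(c-c_0)$ splits the transport term so that the third- and first-order pieces together with $-2\partial_y(\psi_c v)$ reassemble into $\partial_y(-\partial_y^2+c_0-2\psi_c)v$, the leftover $(\dot\gamma+c-c_0)\partial_y v$ is displayed separately, and the forcing $(\dot\gamma\partial_y+\dot c\partial_c)\psi_c$ together with the nonlinearity $\partial_y(v^2)$ complete \eqref{vequation}.

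For \eqref{wequation}, set $w(y,t)=e^{ay}v(y,t)$. Because $a$ is constant and the weight does not depend on $t$, we have $w_t=e^{ay}v_t$, so it suffices to multiply \eqref{vequation} by $e^{ay}$ and re-express the right-hand side in terms of $w$. Here I would use two facts: (i) for any polynomial $P$, $e^{ay}P(\partial_y)\big(e^{-ay}w\big)=P(\partial_y-a)w$, which in particular gives $e^{ay}\partial_y v=(\partial_y-a)w$ and lets us keep the dispersive contribution in the sandwiched form $e^{ay}\partial_y(-\partial_y^2+c_0-2\psi_c)e^{-ay}w$; and (ii) multiplication operators --- by $\psi_c$, by $v^2$, and by the constants $c-c_0$ and $\dot\gamma$ --- commute with $e^{ay}$. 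Applying (i)--(ii) term by term to \eqref{vequation}: the transport term becomes $(\dot\gamma+c-c_0)(\partial_y-a)w$, the forcing $(\dot\gamma\partial_y+\dot c\partial_c)\psi_c$ is simply multiplied through by $e^{ay}$, and $\partial_y(v^2)$ together with the $(c-c_0)$ piece can be grouped into a single term of the form $e^{ay}\partial_y\big((c-c_0+v^2)e^{-ay}w\big)$; collecting everything yields \eqref{wequation}. One may alternatively expand $e^{ay}\partial_y(-\partial_y^2+c_0-2\psi_c)e^{-ay}$ fully into $(\partial_y-a)(-(\partial_y-a)^2+c_0-2\psi_c)$, at the cost of generating extra $a$-dependent lower-order terms.

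There is no serious conceptual obstacle; the proof is a bookkeeping exercise, and the two points that demand care are, first, consistently distinguishing the running speed $c(t)$ from the reference speed $c_0$, so that the linear part of \eqref{vequation} is exactly the linearization of KdV about $\psi_{c_0}$ while all of the $c-c_0$ dependence is displaced into the explicitly lower-order terms (this is precisely what makes the decomposition useful in the iteration of Section~4), and second, the handling of the exponential conjugation in the dispersive term, where one must decide whether to leave the operator in the sandwiched form $e^{ay}(\cdots)e^{-ay}$ or to expand it, the statement adopting the former. It is also worth recording that $\partial_c\psi_c$ inherits the Schwartz-type exponential decay of $\psi_c$, so all of the forcing terms appearing on the right-hand sides lie in $H^1_a$, which is implicitly needed for the equations to be interpreted in the function spaces used later.
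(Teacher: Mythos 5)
Your derivation is correct in method, but it is worth knowing that the paper does not actually prove this lemma: its ``proof'' consists of the single line ``For these calculations, see \cite{MR1289328},'' deferring entirely to Pego--Weinstein. Your proposal therefore supplies the computation the paper omits, and the route you take --- substitute the modulated ansatz, use $y_t=-(c+\dot\gamma)$, cancel the pure-soliton terms against the soliton ODE, split $c=c_0+(c-c_0)$, and then conjugate by $e^{ay}$ using $e^{ay}\partial_y e^{-ay}=\partial_y-a$ --- is exactly the standard one and is the computation Pego--Weinstein perform. One caveat: if you carry the substitution through carefully, the nonlinearity and the $\dot c\,\partial_c\psi_c$ forcing come out as $-\partial_y(v^2)$ and $-\dot c\,\partial_c\psi_c$, not with the $+$ signs displayed in \eqref{vequation}; likewise, in \eqref{wequation} the multiplier in the grouped term should be $c-c_0+v$ rather than $c-c_0+v^2$ (since $e^{ay}\partial_y(v\cdot e^{-ay}w)=e^{ay}\partial_y(v^2)$), consistent with how the same term is written in \eqref{wequationmod} and in the modulation system \eqref{modulation}. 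These discrepancies are with the paper's statement (almost certainly typos, immaterial to the later norm estimates), not with your argument, but your write-up asserts that the computation lands exactly on the displayed formulas, so you should either note the sign conventions you are using or record that the stated signs do not affect the subsequent analysis.
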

\begin{proof} For these calculations, see \cite{MR1289328}.
\end{proof}

For fixed $c>0$, define the operator $A_a=e^{ay}\p_y(-\p_y^2+c-2\psi_c)e^{-ay}.$ We have the following from \cite{MR1289328}:
\begin{proposition}\label{spectral} For $0 < a < \sqrt{\frac{c}{3}},$ the spectrum of $A_a$ in $L^2$ consists of the following:
\begin{enumerate}
\item An eigenvalue of algebraic multiplicity $2$ at $\lambda=0$.  A generator of the kernel of $A_a$ is $\zeta_1=e^{ay}\p_y\psi_c$, and the second generator of the generalized kernel of $A_a$ is $\zeta_2=e^{ay}\p_c\psi_c$.
\item A continuous spectrum $S^a$ parametrized by $\tau \to i\tau^3-3a\tau^2+(c-3a^2)i\tau-a(c-a^2)$.  For any element $\lambda$ of this continuous spectrum, the real part of $\lambda$ is at most $-a(c-a^2)<0$.
\end{enumerate}
The spectrum contains no other elements.
\end{proposition}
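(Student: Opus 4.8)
The plan is to follow the spectral strategy of \cite{MR1289328}, separating the essential spectrum, which is governed by the behaviour of the coefficients at $y=\pm\infty$, from the point spectrum, which is detected by an Evans function. First I would conjugate the weights through the derivatives, using $e^{ay}\p_y e^{-ay}=\p_y-a$, to write
$$
A_a=(\p_y-a)\bigl(-(\p_y-a)^2+c-2\psi_c\bigr),
$$
a third-order operator whose only non-constant coefficient, the $\psi_c$ term, decays like $e^{-\sqrt{c}\,|y|}$ as $|y|\to\infty$. Next I would read off the generalized kernel directly from the profile equation: writing $\mathcal{L}=-\p_y^2+c-2\psi_c$ and differentiating $\p_y(\psi_c''-c\psi_c+\psi_c^2)=0$ first in $y$ and then in $c$ gives $\mathcal{L}\psi_c'=0$ and $\mathcal{L}\p_c\psi_c=-\psi_c$, so that
$$
A_a\zeta_1=e^{ay}\p_y\mathcal{L}\psi_c'=0,\qquad A_a\zeta_2=e^{ay}\p_y\mathcal{L}\p_c\psi_c=-\zeta_1,
$$
with $\zeta_1=e^{ay}\psi_c'$ and $\zeta_2=e^{ay}\p_c\psi_c$ both in $L^2$ because $\psi_c$ and its derivatives decay like $e^{-\sqrt c\,|y|}$ and $a<\sqrt{c/3}<\sqrt c$; this already exhibits a Jordan block of size at least $2$ at $\lambda=0$.

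For the essential spectrum I would invoke the exponential-dichotomy / Fredholm theory for ODE operators: since $\psi_c$ is exponentially localized, $A_a-\lambda$ is Fredholm of index zero exactly off the curve where the constant-coefficient limit operator $A_a^\infty=(\p_y-a)(-(\p_y-a)^2+c)$ fails to be hyperbolic, and on the complement of that curve the spectrum of $A_a$ consists of isolated eigenvalues of finite algebraic multiplicity (in particular there is no residual spectrum there). Diagonalizing $A_a^\infty$ by the Fourier transform, its spectrum is the image of
$$
\tau\longmapsto(i\tau-a)\bigl(\tau^2+2ai\tau+c-a^2\bigr)=i\tau^3-3a\tau^2+(c-3a^2)i\tau-a(c-a^2)=:S^a ,
$$
whose real part is $-a(3\tau^2+c-a^2)\le -a(c-a^2)$; the hypothesis $0<a<\sqrt{c/3}$ is precisely what forces $c-a^2>\tfrac{2c}{3}>0$, so $S^a$ lies strictly in the open left half-plane.

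It remains to show that, to the right of $S^a$, the only spectral point is $\lambda=0$, with algebraic multiplicity exactly $2$. For this I would build the Evans function $D(\lambda)$, analytic in the component of $\C\setminus S^a$ containing the right half-plane, whose zeros counted with multiplicity are exactly the eigenvalues of $A_a$ there. The routine inputs are that $D$ does not vanish near $S^a$ or for large $|\lambda|$, and that $D$ vanishes at $0$ to order at least $2$ by the computation above. The hard part --- really the spectral stability of the KdV soliton --- is the two remaining facts: $D(\lambda)\neq 0$ for $\mathrm{Re}\,\lambda\ge 0$ with $\lambda\neq 0$, and $D''(0)\neq 0$. Both follow, as in \cite{MR1289328}, from the Hamiltonian structure of the problem: $\mathcal{L}$ has exactly one negative eigenvalue, a one-dimensional kernel spanned by $\psi_c'$, and the rest of its spectrum positive, while the Vakhitov--Kolokolov quantity $\tfrac{d}{dc}\int\psi_c^2$ is positive (for KdV, $\int\psi_c^2$ is a positive multiple of $c^{3/2}$); this computation pins $D''(0)\neq 0$ and, through the standard index count for $A_0=\p_y\mathcal{L}$, excludes unstable eigenvalues. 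Finally, an eigenvalue lying in the strip $-a(c-a^2)<\mathrm{Re}\,\lambda<0$ would have to vary continuously with $a$ and, as $a\downarrow 0$, limit to a nonzero point of the imaginary axis --- an embedded eigenvalue of $A_0$ --- which the preceding analysis rules out; a homotopy argument in $a$ therefore disposes of the strip. Assembling these pieces gives $\sigma(A_a)=\{0\}\cup S^a$ with the stated generators and multiplicity. I expect the control of eigenvalues in that thin strip between $S^a$ and the imaginary axis to be the most delicate point.
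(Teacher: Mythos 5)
Your sketch is correct and is essentially the argument of Pego--Weinstein, which is exactly what the paper does: Proposition \ref{spectral} is stated there with no proof beyond the citation of \cite{MR1289328}, and your outline (conjugating the weight, reading the Jordan block at $0$ off the differentiated profile equation, computing the essential spectrum from the constant-coefficient limit, and delegating the nonvanishing of the Evans function away from $0$ together with $D''(0)\neq 0$ to the cited reference) reconstructs that proof faithfully, with the genuinely hard steps correctly identified and attributed.
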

We also need to consider the elements of the spectrum to $A_a^*$, which are \\$\eta_1=e^{-ay}[\theta_1\p_y^{-1}\p_c\psi_c + \theta_2\psi_c]$ and $\eta_2=e^{-ay}(\theta_3\psi_c)$, where $\p_y^{-1}f$ is defined to be $\int_{-\infty}^yf(t)dt$ and $\theta_1$, $\theta_2$ and $\theta_3$ are appropriate constants to obtain the biorthogonality relationship $\langle \zeta_j,\eta_k\rangle=\delta_{jk}$.  We will define the $L^2$ spectral projections $Pw=\sum_{i=1}^2\langle w, \eta_i\rangle \zeta_i$ and $Qw=w-Pw$ onto the discrete and continuous spectrums of $A_a$ respectively, with respect to the fixed initial value of $c$, $c_0$.  Finally, note that the spectrum is the same in $H^1$:
\begin{proposition}\label{H1spec} For $0 < a < \sqrt{\frac{c}{3}},$ the spectrum of $A_a$ in $H^1$ is the same as its spectrum in $L^2$.
\end{proposition}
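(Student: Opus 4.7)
The plan is to decompose $A_a = A_a^{(0)} + B$, where
\begin{equation*}
A_a^{(0)} = e^{ay}\p_y(-\p_y^2 + c)e^{-ay} = -\p_y^3 + 3a\p_y^2 + (c - 3a^2)\p_y - a(c-a^2)
\end{equation*}
is a constant-coefficient Fourier multiplier and
\begin{equation*}
B = -2\psi_c \p_y + 2a\psi_c - 2\psi_c'
\end{equation*}
is first-order with Schwartz coefficients. Testing $A_a^{(0)}$ against $e^{i\tau y}$ produces the symbol $p(\tau) = i\tau^3 - 3a\tau^2 + i(c - 3a^2)\tau - a(c-a^2)$, which parametrizes exactly the curve $S^a$ from Proposition \ref{spectral}; hence the spectrum of $A_a^{(0)}$ on both $L^2$ and $H^1$ equals $S^a$. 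We prove the two inclusions $\sigma_{L^2}(A_a) \subseteq \sigma_{H^1}(A_a)$ and $\sigma_{H^1}(A_a) \subseteq \sigma_{L^2}(A_a)$ separately.

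For the first inclusion, note that $\lambda = 0$ lies in $\sigma_{H^1}(A_a)$ because the generalized eigenfunctions $\zeta_1 = e^{ay}\p_y\psi_c$ and $\zeta_2 = e^{ay}\p_c\psi_c$ lie in $H^1$: this is automatic since $\psi_c$ is Schwartz and $a < \sqrt{c/3} < \sqrt{c}$, so $e^{ay}\psi_c$ decays as $e^{(a-\sqrt{c})y}$ at $+\infty$. For $\lambda = p(\tau_0) \in S^a$, we construct a normalized Weyl sequence $u_n(y) = c_n \chi((y - y_n)/N_n)\, e^{i\tau_0 y}$ with $\chi \in C_c^\infty(\R)$ fixed, $c_n$ chosen so that $\|u_n\|_{H^1} = 1$, $N_n \to \infty$, and $y_n \gg N_n$ pushing the support to $+\infty$. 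Because $A_a^{(0)} e^{i\tau_0 y} = \lambda e^{i\tau_0 y}$, every term in $(A_a^{(0)} - \lambda)u_n$ involves at least one derivative landing on the cutoff, each contributing an $N_n^{-1}$ factor; this forces $\|(A_a^{(0)} - \lambda)u_n\|_{H^1} \to 0$. Simultaneously $\|Bu_n\|_{H^1} \to 0$ since $\psi_c$ and $\psi_c'$ are exponentially small on $|y - y_n| \le N_n \ll y_n$, and $u_n \to 0$ weakly in $H^1$; Weyl's criterion then gives $\lambda \in \sigma_{H^1}(A_a)$.

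For the reverse inclusion, fix $\lambda \notin S^a \cup \{0\}$ and let $R(\lambda) = (A_a - \lambda)^{-1}: L^2 \to L^2$ denote the bounded inverse provided by Proposition \ref{spectral}. Given $f \in H^1$, set $u = R(\lambda) f \in L^2$ and rewrite $(A_a - \lambda)u = f$ as
\begin{equation*}
u = (A_a^{(0)} - \lambda)^{-1}(f - Bu),
\end{equation*}
which is legitimate since $\lambda \notin S^a$ makes the Fourier multiplier $(A_a^{(0)} - \lambda)^{-1}$ bounded and three-derivative-gaining on every Sobolev scale. A two-step bootstrap yields first $u \in H^2$ (from $Bu \in H^{-1}$, using $u \in L^2$ and that $B$ is first-order) and then $u \in H^4$ (from $Bu \in H^1$, using $u \in H^2$), so in particular $u \in H^1$. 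The closed graph theorem then upgrades $R(\lambda)$ to a bounded operator on $H^1$, yielding $\lambda \notin \sigma_{H^1}(A_a)$.

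The principal technical point is the $H^1$-normalized Weyl-sequence computation: one must verify that every derivative of $\chi$ contributes the expected $N_n^{-1}$ saving in both the $L^2$ and the $\p_y L^2$ pieces of the $H^1$ norm, and choose $y_n$ large enough relative to the exponential decay rate $\sqrt{c}$ of $\psi_c$ to kill the $Bu_n$ term. Both verifications are routine once the correct scaling $c_n^2 \asymp N_n^{-1}(1 + \tau_0^2)^{-1}$ is tracked; no substantially new idea is required beyond the $L^2$ analysis of \cite{MR1289328}.
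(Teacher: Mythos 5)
Your proof is correct, but it takes a genuinely different route from the paper's. Both arguments start from the same decomposition $A_a = A_a^{(0)} + B$ with $A_a^{(0)} = (\p_y - a)(-(\p_y-a)^2 + c)$ a constant-coefficient Fourier multiplier whose symbol parametrizes $S^a$, and both handle the eigenvalue at $0$ by observing $\zeta_1,\zeta_2 \in H^1$. Where you diverge is in how the continuous spectrum is transferred to $H^1$: the paper simply asserts that $B = -2(\p_y - a)(\psi_c\,\cdot)$ is a compact (i.e.\ relatively compact) perturbation of $A_a^{(0)}$ and invokes stability of the essential spectrum, whereas you prove the two inclusions by hand — explicit truncated-plane-wave Weyl sequences pushed to $+\infty$ for $\sigma_{L^2}(A_a) \subseteq \sigma_{H^1}(A_a)$, and an elliptic bootstrap of the $L^2$ resolvent (using that $(A_a^{(0)}-\lambda)^{-1}$ gains three derivatives when $\lambda \notin S^a$) for the reverse inclusion. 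Your version is longer but more self-contained: it does not appeal to Weyl's essential-spectrum theorem, it explicitly rules out new point spectrum appearing in $H^1$ (which the paper's compact-perturbation argument leaves implicit), and the bootstrap yields the extra information that the $H^1$ resolvent actually maps into $H^4$. The paper's version is shorter and generalizes immediately to any Sobolev scale on which the relative compactness holds. One cosmetic quibble: your Weyl-sequence and resolvent arguments establish that the \emph{spectrum as a set} coincides; neither your argument nor the paper's explicitly re-derives the algebraic multiplicity $2$ at $\lambda = 0$ in $H^1$, though this follows at once from $H^1 \subset L^2$ together with $\zeta_1,\zeta_2 \in H^1$, exactly as you note for membership.
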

\begin{proof} First, note that the kernel elements listed above are also elements of $H^1$.  Therefore the discrete spectrum is the same in $H^1$.  (Since $H^1 \subset L^2$.)  Second, note that \\$A_a=e^{ay}\p_y(-\p_y^2+c-2\psi_c)e^{-ay}=(\p_y-a)(-(\p_y-a)^2+c_0)-2(\p_y-a)\psi_c.$  Because $\psi_c$ is an exponentially decaying $C^\infty$ function, the operator $-2(\p_y-a)\psi_c$ is a compact perturbation of $A_a^0:=(\p_y-a)(-(\p_y-a)^2+c_0),$ whose continuous spectrum in $H^1$ is exactly as in $L^2$.
\end{proof}

Returning to the difference equation \eqref{wequation}, for each fixed $t$ we select $c(t)$ and $\gamma(t)$ so that $Pw=0$, and $Qw=w$.
Defining ${\mathcal{F}}=[e^{ay}(\dot{c}\p_c + \dot{\gamma}\p_y]\psi_c + \dot{\gamma}(\p_y-a)w + e^{ay}\p_y(c-c_0 + v^2)e^{-ay}w],$ we have that
$$w_t = A_aw + Q{\mathcal{F}},$$ and
\begin{eqnarray}\label{modulation}\begin{bmatrix} 1 + \langle e^{ay}(\p_y\psi_c-\p_y\psi_{c_0}),\eta_1 \rangle -\langle w, \p_y\eta_1 \rangle & \langle e^{ay}(\p_c\psi_c-\p_c\psi_{c_0}),\eta_1 \rangle \\ \langle  e^{ay}(\p_y\psi_c-\p_y\psi_{c_0}),\eta_2 \rangle-\langle w, \p_y\eta_2 \rangle & 1 + \langle e^{ay}(\p_c\psi_c-\p_c\psi_{c_0}),\eta_2 \rangle \end{bmatrix}\begin{bmatrix} \dot{\gamma} \\ \dot{c} \end{bmatrix} \phantom{blahblahblahblahblah} \\ \phantom{blahblahblahblahblah} = \begin{bmatrix} \langle e^{ay}\p_y(c-c_0 + v^2)e^{-ay}w,\eta_1 \rangle \\ \langle e^{ay}\p_y(c-c_0 + v^2)e^{-ay}w, \eta_2 \rangle \end{bmatrix}\end{eqnarray}

\section{Local-in-Time Theory}
The modulation equation for $w$ can be written out as:

\begin{equation}
\label{wequationmod}
\left \{ \begin{array}{l l}
\partial_{t} w + \partial_{x}^{3}w - 3a \partial_{x}^{2} w + (c_{0} - 3a^{2}) \partial_{x}w + a (c_{0} - a^{2}) w - 2 (\partial_{x} - a) (\psi_{c}w) &  \\
\qquad - Q \big [ e^{ax} (\dot{c} \partial_{c} + \dot{\gamma} \partial_{x}) \psi_{c} + \dot{\gamma} (\partial_{x} - a) w + e^{ax} \partial_{x}(c - c_{0} + v) e^{-ax} w \big ] = 0, & \\
w(0,x) = w_{0}(x). &
\end{array}
\right .
\end{equation}

To implement our iteration argument, we need to establish control of the perturbations $v$ and $w$ as follows: suppose that $v_{0}, w_{0} \in H^{1}(\R)$ and let $v(t,x), w(t,x)$ satisfy \eqref{vequation} and \eqref{wequationmod}, respectively. There is a $\delta > 0$ such that
\begin{equation}
\label{targetcontrol}
\| v \|_{X^{1}_{\delta}} \lesssim \| v_{0} \|_{H^{1}} \qquad \text{and} \qquad \| w \|_{X^{1}_{\delta}} \lesssim \| w_{0} \|_{H^{1}}
\end{equation}
in some space $X^{1}_{\delta}$ of space-time functions localized to the time interval $[0,\delta]$; see Proposition \ref{vwlwp} below.

It turns out that the selection of the space $X^{1}_{\delta}$ is a rather delicate matter, owing in large part to the requirement that it must accommodate solutions of both \eqref{vequation} and \eqref{wequation}. A natural candidate is the space $X^{1,b}$ with $b > 1/2$ defined by
\begin{equation*}
\| f \|_{X^{1,b}} := \left \| \langle \tau - \xi^{3} \rangle^{b} \langle \xi \rangle \widetilde{f} \right \|_{L^{2}_{\tau,\xi}},
\end{equation*}
where $f(t,x)$ is a space-time function and $\widetilde{f}(\tau,\xi)$ is its (space-time) Fourier transform. These spaces were successfully implemented in the study of the KdV equation (see \cite{MR1215780}, \cite{MR1329387}) and would be sufficient to establish \eqref{targetcontrol} for $v$. A theory applicable to the weighted perturbation $w$ is also available in $X^{1}_{b}$ (see \cite{MR1889080}), however, Molinet and Ribaud prove that
\begin{equation*}
\| w \|_{X^{1,b}} \lesssim \| w_{0} \|_{H^{1 + (2b-1)}}.
\end{equation*}
Since we require $b > 1/2$ to obtain the embedding $X^{s,b} \hookrightarrow C_{t}H^{s}_{x}$, this estimate is insufficient for our purposes.  Alternatively, one could try to accommodate the presence of the dissipative term in the definition of the $X^{1,b}$ space; for instance one could consider the space $Y^{1,b}$ with norm
\begin{equation*}
\| f \|_{Y^{1,b}} := \left \| \langle i(\tau - \xi^{3}) + \xi^{2} \rangle^{b} \langle \xi \rangle \widetilde{f} \right \|_{L^{2}_{\tau,\xi}}
\end{equation*}
In this case there is an adequate theory to handle the equation for the weighted perturbation $w$ (see \cite{MR1918236}); however this space is no longer suitable for the unweighted perturbation $v$.

Following this reasoning, we are lead to consider a Besov refinement of the space $X^{1,b}$, which enables us to choose $b = 1/2$ while still having an embedding into $C^{0}_{t} H^{1}_{x}$. We begin with some notation. Define the sets $A_{j}$ and $B_{k}$ by
\begin{align*}
A_{j} &:= \{ (\tau, \xi) \in \R^{2} \ \vert \ 2^{j} \leq \langle \xi \rangle \leq 2^{j+1} \}, \qquad j \geq 0,\\
B_{k} &:= \{ (\tau, \xi) \in \R^{2} \ \vert \ 2^{k} \leq \langle \tau - \xi^{3} \rangle \leq 2^{k+1} \}, \qquad k \geq 0.
\end{align*}
Here we regard $\tau,\xi$ as being the frequency variables associated to the time and space variables $t,x$, respectively. We define the space $X^{s,b,1}$ to be the completion of the Schwartz class functions in the norm
\begin{equation}
\label{Xsb1defn}
\| f \|_{X^{s,b,1}} := \left ( \sum_{j \geq 0} 2^{2sj} \left ( \sum_{k \geq 0} 2^{bk}  \| \widetilde{f} \|_{L^{2}(A_{j} \cap B_{k})} \right )^{2} \right )^{1/2}.
\end{equation}
A slightly modified version of this space was used by Kishimoto \cite{MR2501679} in the case where $s = -3/4$ to establish global well-posedness of the KdV equation in $H^{-3/4}(\R)$. Here we are interested mostly in the spaces $X^{s,1/2,1}$ and $X^{s,-1/2,1}$. We recall the following two embeddings valid for $b > 1/2$,
\begin{equation*}
X^{s,b} \hookrightarrow X^{s,1/2,1} \hookrightarrow C^{0}_{t} H^{s}_{x},
\end{equation*}
both of which are easily verified using the Cauchy-Schwartz inequality.

Following standard arguments in the $X^{s,b}$ spaces \cite{MR1209299,MR1215780,MR1329387} we define the time-localized space $X^{s,1/2,1}_{\delta}$ to be the space with norm
\begin{equation*}
\| u \|_{X^{s,1/2,1}_{\delta}} := \inf \{ \| w \|_{X^{s,1/2,1}} \ \vert \ w \equiv u \ \text{on} \ [0,\delta] \}.
\end{equation*}

Since we work primarily in frequency space, we define the space $\widetilde{X}^{s,1/2,1}$ corresponding to the norm
\begin{equation}
\| f \|_{\widetilde{X}^{s,1/2,1}} := \left ( \sum_{j \geq 0} 2^{2sj} \left ( \sum_{k \geq 0} 2^{bk}  \| f \|_{L^{2}(A_{j} \cap B_{k})} \right )^{2} \right )^{1/2},\label{Xs,1/2,1}
\end{equation}
where $f = f(\tau,\xi)$.

\subsection{Linear estimates}
We introduce notation for the linear evolutions corresponding to \eqref{vequation} and \eqref{wequation}. Let $W_{1}(t)$ denote the standard Airy evolution,
\begin{equation*}
\mathcal{F}_{x} \left ( W_{1}(t)f \right )(\xi) = e^{-it\xi^{3}} \widehat{f}(\xi).
\end{equation*}
We also introduce $W_{2}(t)$, which we define for $t \geq 0$ by
\begin{equation*}
\mathcal{F}_{x} \left ( W_{2}(t)f \right )(\xi) = e^{-it\xi^{3} - p_{a}(\xi)t} \widehat{f}(\xi),
\end{equation*}
where $p_{a}(\xi) = 3a\xi^{2} + a(c_{0}^{2} - a)$.
We extend this to all of $t \in \R$ by
\begin{equation*}
\mathcal{F}_{x} \left ( W_{2}(t) f\right )(\xi) = e^{-it\xi^{3} - p_{a}(\xi) \vert t \vert} \widehat{f}(\xi).
\end{equation*}

In what follows we let $\rho$ be a time cut off function such that
\begin{equation*}
\rho \in C_{0}^{\infty}(\R),\qquad \supp \ \rho \subset [-2,2], \qquad \rho \equiv 1 \ \text{on} \ [-1, 1].
\end{equation*}
Set $\rho_{T}(\cdot) = \rho(\cdot/T)$.

\begin{lemma}
\label{linearairy}
With $W_{1}(t)$ defined as above we have the following two linear estimates:
\begin{align}
\| \rho(t) W_{1}(t)f \|_{X^{s,1/2,1}} &\lesssim \| f \|_{H^{s}}, \label{linear1}\\
\left \| \rho(t) \int_{0}^{t} W_{1}(t-s) F(s) ds \right \|_{X^{s,1/2,1}} &\lesssim \| F \|_{X^{s,-1/2,1}}. \label{linear2}
\end{align}
\end{lemma}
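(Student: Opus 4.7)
The plan is to treat both estimates by direct Fourier-side computations, exploiting the Schwartz decay of $\hat{\rho}$ to close the $\ell^1_k$ Besov summation over dyadic modulation scales. For \eqref{linear1}, I compute the space-time Fourier transform of $\rho(t) W_1(t) f$ and find that it factors as
\begin{equation*}
\widetilde{\rho(t) W_{1}(t) f}(\tau,\xi) = \hat{\rho}(\tau - \xi^{3}) \hat{f}(\xi),
\end{equation*}
so that the restriction to $A_j \cap B_k$ separates in $\xi$ and in the modulation variable $\sigma := \tau - \xi^3$. The $X^{s,1/2,1}$ norm then factorizes as
\begin{equation*}
\| \rho(t) W_{1}(t) f \|_{X^{s,1/2,1}} = \| f \|_{H^{s}} \cdot \sum_{k \geq 0} 2^{k/2} \| \hat{\rho} \|_{L^{2}(\{ 2^{k} \leq \langle \sigma \rangle \leq 2^{k+1}\})},
\end{equation*}
and the remaining $\ell^1_k$ sum is finite since $\hat{\rho}$ decays faster than any power of $\langle \sigma \rangle$.

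For the Duhamel estimate \eqref{linear2}, I start from the identity
\begin{equation*}
\mathcal{F}_{t,x}\!\left[ \rho(t) \int_{0}^{t} W_{1}(t-s) F(s)\, ds \right]\!(\tau,\xi) = \int_{\R} \frac{\hat{\rho}(\tau - \tau') - \hat{\rho}(\tau - \xi^{3})}{i(\tau' - \xi^{3})} \widetilde{F}(\tau',\xi)\, d\tau',
\end{equation*}
which is obtained by inserting the $t$-frequency representation of $F$ into the time integral. I then split $\widetilde{F}$ into a low-modulation part $F_{\mathrm{low}}$ supported on $\langle \tau' - \xi^3 \rangle \lesssim 1$ and a high-modulation part $F_{\mathrm{high}}$. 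For $F_{\mathrm{low}}$, the apparent singularity at $\tau' = \xi^3$ is resolved by the Taylor expansion $\frac{e^{it\tau'}-e^{it\xi^3}}{i(\tau'-\xi^3)} = e^{it\xi^3}\sum_{n\geq 0} \frac{(it)^{n+1}(\tau'-\xi^3)^n}{(n+1)!}$, which reduces the low-modulation contribution to an absolutely convergent sum of terms of the form $\rho(t)\, t^{n+1}\, W_{1}(t) g_n$, each estimated exactly as in \eqref{linear1} with $\rho$ replaced by $\rho(t) t^{n+1}$ (still Schwartz). For $F_{\mathrm{high}}$, the denominator is bounded away from zero, and I split the kernel as the sum of a \emph{free-wave piece} $\hat{\rho}(\tau - \xi^3)/(i(\tau'-\xi^3))$ and a \emph{convolution piece} $\hat{\rho}(\tau - \tau')/(i(\tau'-\xi^3))$.

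The free-wave piece takes the form $\hat{\rho}(\tau - \xi^3) \cdot G(\xi)$ with $G(\xi) := \int \widetilde{F_{\mathrm{high}}}(\tau',\xi)/(i(\tau'-\xi^3))\, d\tau'$; a dyadic decomposition in input modulation together with Cauchy--Schwartz gives $|G(\xi)| \lesssim \sum_{k' \geq 0} 2^{-k'/2} \|\widetilde{F}\|_{L^2_{\tau'}(B_{k'})}$, and applying \eqref{linear1} produces exactly the bound $\|F\|_{X^{s,-1/2,1}}$. For the convolution piece, I decompose $\widetilde{F}$ dyadically in input modulation $B_{k'}$ and estimate the output on each $A_j \cap B_k$. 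When $k \sim k'$, the denominator supplies precisely the gain $2^{-k'}$ needed to turn the $X^{s,-1/2,1}$ bound $2^{-k'/2}\|\widetilde{F}\|_{L^2(B_{k'})}$ into the $X^{s,1/2,1}$ bound $2^{k/2}\|u\|_{L^2(B_k)}$; when $|k - k'|$ is large, the Schwartz decay $|\hat{\rho}(\tau - \tau')| \lesssim 2^{-N\max(k,k')}$ dominates and permits a Schur-type summation in $k$ and $k'$. The main obstacle is precisely this $\ell^1_k$ summation in a Besov-type norm, where Plancherel orthogonality is unavailable; the argument succeeds only because $\hat{\rho}$ has sufficiently rapid decay to absorb the weights $2^{k/2}$ in both the diagonal and off-diagonal regimes simultaneously.
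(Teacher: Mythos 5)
Your proof is correct and follows essentially the same route the paper takes: the factorization $\widetilde{\rho W_1 f}=\hat{\rho}(\tau-\xi^3)\hat{f}(\xi)$ with the $\ell^1_k$ sum closed by the Schwartz decay of $\hat{\rho}$ is exactly the ``similar argument using the construction of the space'' the paper invokes for \eqref{linear1}, and your treatment of \eqref{linear2} (the kernel identity, Taylor expansion near $\tau'=\xi^3$, and the free-wave/convolution splitting with Schur summation off the diagonal) is precisely the standard Duhamel argument that the paper outsources to Lemma 4.1 of Kishimoto. The only difference is that you write out in full what the paper cites; there is no substantive gap (the stray factor of $i$ in your Taylor expansion is immaterial).
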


\begin{proof}
Recall that $\| W_{1}(t) f \|_{X^{s,1/2+}} \lesssim \| f \|_{H^{s}}$. By a similar argument, using the construction of the space and H\"older's inequality, we obtain \eqref{linear1}. The estimate \eqref{linear2} is established in Lemma 4.1 of \cite{MR2501679}.
\end{proof}

\begin{lemma}
\label{lineardissipative}
Let $0 < a \leq \min(1,c_{0})$, and let $s \in \R$. For $W_{2}(t)$ defined as above we have the following two linear estimates:
\begin{align}
\| \rho(t) W_{2}(t)f \|_{X^{s,1/2,1}} &\lesssim \| f \|_{H^{s}}, \label{linear3}\\
\left \| \chi_{\R_{+}}(t) \rho(t) \int_{0}^{t} W_{2}(t-s) F(s) ds \right \|_{X^{s,1/2,1}} &\lesssim \| F \|_{X^{s,-1/2,1}}. \label{linear4}
\end{align}
\end{lemma}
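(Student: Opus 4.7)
The strategy is to mirror the Airy semigroup estimates of Lemma~\ref{linearairy} while carefully tracking the dissipative factor $e^{-p_{a}(\xi)|t|}$. The key observation is that after multiplication by the time cutoff $\rho(t)$, this factor contributes to the space-time Fourier transform a Lorentzian-type profile in the modulation variable $\tau - \xi^{3}$ of width $p_{a}(\xi)$, which is precisely tame enough to be summed at the critical Besov index $b = 1/2$.

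For the homogeneous estimate \eqref{linear3}, I would compute the space-time Fourier transform directly and obtain
\begin{equation*}
(\rho(t) W_{2}(t) f)^{\sim}(\tau, \xi) = \widehat{f}(\xi)\, K(\tau - \xi^{3}, \xi), \qquad K(\sigma, \xi) := \int_{\R} \rho(t)\, e^{it\sigma - p_{a}(\xi)|t|}\,dt.
\end{equation*}
Splitting the defining integral into the halves $t \geq 0$ and $t \leq 0$, integrating by parts against the Schwartz function $\rho$, and using the identity $\int_{\R} e^{it\sigma - p|t|}\,dt = 2p/(\sigma^{2} + p^{2})$, one gets the pointwise bound
\begin{equation*}
|K(\sigma, \xi)| \lesssim \frac{p_{a}(\xi)}{\sigma^{2} + p_{a}(\xi)^{2}} + (\text{rapidly decaying tail in } \sigma).
\end{equation*}
A direct case split on whether $2^{k} \lesssim p_{a}(\xi)$ or $2^{k} \gg p_{a}(\xi)$ then yields the uniform Besov bound
\begin{equation*}
\sum_{k \geq 0} 2^{k/2} \|K(\cdot, \xi)\|_{L^{2}(\{|\sigma| \sim 2^{k}\})} \lesssim 1,
\end{equation*}
and summing in $\xi$ over the dyadic annuli $A_{j}$ against $|\widehat{f}(\xi)|^{2}$ completes the proof.

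For the inhomogeneous estimate \eqref{linear4}, I would follow the template of Kishimoto's Lemma 4.1 in \cite{MR2501679}. After writing $F(s, x) = \int e^{-is\lambda + ix\xi}\, \widetilde{F}(\lambda, \xi)\,d\lambda\,d\xi$ and carrying out the $s$-integral in Duhamel's formula explicitly, one obtains, with $\mu := \xi^{3} - \lambda$ and $p := p_{a}(\xi)$,
\begin{equation*}
\int_{0}^{t} e^{-i(t-s)\xi^{3} - p(t-s)}\, e^{-is\lambda}\,ds = \frac{e^{-it\lambda} - e^{-it\xi^{3} - pt}}{i\mu + p}.
\end{equation*}
This decomposes $\chi_{\R_{+}}(t) \rho(t) \int_{0}^{t} W_{2}(t-s) F(s)\,ds$ into a \emph{forced} piece with factor $e^{-it\lambda}$, whose time Fourier transform concentrates at $\tau \approx \lambda$ so that the output modulation $\tau - \xi^{3}$ matches the input modulation $\lambda - \xi^{3}$ up to sign, and a \emph{free} piece with factor $e^{-it\xi^{3} - pt}$ which is essentially $\rho \cdot W_{2}(\cdot)$ applied to the profile $\widetilde{F}(\lambda, \xi)/(i\mu + p)$ and is therefore controlled directly by \eqref{linear3}. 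For the forced piece one uses the resolvent bound $|i\mu + p|^{-1} \lesssim \min(|\mu|^{-1}, p^{-1})$ together with a dyadic case analysis over the sizes of $\mu$, $p_{a}(\xi)$, and $\tau - \xi^{3}$; the factor of $2^{-k/2}$ inherent in the $X^{s,-1/2,1}$ input norm, together with the $2^{k/2}$ required for the $X^{s,1/2,1}$ output norm, is absorbed via the same Lorentzian summation that handles \eqref{linear3}.

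The main obstacle I anticipate is closing the Besov summation at the critical exponent $b = 1/2$, particularly in the transition regime where $2^{k}$ is comparable to $p_{a}(\xi)$. This is precisely the regime in which a standard $X^{s,b}$ estimate log-diverges, producing the $\langle \xi \rangle^{2b-1}$ loss observed by Molinet--Ribaud \cite{MR1889080}. The Besov refinement \eqref{Xsb1defn} is designed to tame this divergence, and most of the technical work lies in verifying that the frequency-localized Lorentzian contributions in both \eqref{linear3} and \eqref{linear4} assemble into bounded $\ell^{1}$-in-$k$ sums at the critical index.
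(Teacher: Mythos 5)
Your proposal is correct and follows essentially the same route as the paper: both reduce \eqref{linear3} to the uniform-in-$\xi$ bound $\sum_{k}2^{k/2}\|P_k(\rho(t)e^{-p_a(\xi)|t|})\|_{L^2_t}\lesssim 1$ (which you obtain by explicitly computing the Lorentzian Fourier transform of $e^{-p|t|}$ and case-splitting on $2^k$ versus $p_a(\xi)$, while the paper uses a paraproduct decomposition together with the scaling invariance of $\dot B^{1/2}_{2,1}$ and $e^{-|t|}\in\dot B^{1/2}_{2,1}$ --- the same underlying fact), and both treat \eqref{linear4} by computing the Duhamel kernel $(e^{it\tau}-e^{-p_a(\xi)t})/(i\tau+p_a(\xi))$, splitting into the forced and free pieces, using the resolvent bound $|i\tau+p_a(\xi)|^{-1}\lesssim\langle\tau\rangle^{-1}$ with Cauchy--Schwarz on dyadic modulation blocks to produce the $2^{-k/2}$ input weight, and controlling the free piece by the homogeneous estimate. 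The paper additionally splits the kernel at $|\tau|=1$ and Taylor-expands the low-modulation contributions, but this is a technical refinement of the same scheme rather than a different idea.
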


\begin{proof}
Our proof of \eqref{linear3} mimics the argument given in \cite{MR2514729} Proposition 4.3. Observe that it suffices to prove that for each $j \geq 0$,
\begin{equation*}
 \| \rho(t) W_{2}(t) \widehat{f} \|_{L^{2}(A_{j})} \lesssim \| \widehat{f} \|_{L^{2}(A_{j})}.
\end{equation*}
If $j=0$, then $\vert p_{a}(\xi) \vert \leq 3a + a(c_{0}^{2} - a)$, and
\begin{equation}
\label{homeq1}
\begin{aligned}
\| \rho(t) W_{2}(t) \widehat{f}_{0} \|_{L^{2}}
&\lesssim \sum_{k \geq 0} 2^{k/2} \| \mathcal{F}_{t} \left ( \rho(t) e^{-p_{a}(\xi)\vert t \vert} \right ) \widehat{f}_{0} \|_{L^{2}(B_{k})}\\
&\lesssim \sum_{n \geq 0} \frac{(3a + a(c_{0}^{2}-a))^{n}}{n!} \| \widehat{f}_{0} \|_{L^{2}} \| \rho(t) \vert t \vert^{n} \|_{H^{1}_{t}} \lesssim \| f_{0} \|_{L^{2}}.
\end{aligned}
\end{equation}

Let $P_{k}$ be the projection operator defined by
\begin{equation*}
\mathcal{F} (P_{k} \psi)(\tau,\xi) = \chi_{B_{k}}(\tau,\xi) \mathcal{F}(\psi)(\tau,\xi).
\end{equation*}
Notice that if $\xi \sim 2^{j}$, then for any $k \geq 0$ we have
\begin{equation*}
\| P_{k}(\exp(-p_{a}(\xi) \vert t \vert))(t) \|_{L^{2}_{t}} \lesssim \| P_{k}(\exp(-p_{a}(2^{j}) \vert t \vert))(t) \|_{L^{2}_{t}}.
\end{equation*}
This follows from Plancherel and the fact that
\begin{equation*}
\mathcal{F}_{t}(e^{-\vert t \vert})(\tau) = C \frac{1}{1 + \vert \tau \vert^{2}}.
\end{equation*}

In the case when $j \geq 1$ we have
\begin{align*}
\| \rho(t) W_{2}(t) \widehat{f}_{j} \|_{L^{2}_{\xi}}
&\lesssim \sum_{k \geq 0} 2^{k/2} \left \| \widehat{f}_{j}(\xi) \mathcal{F}_{t} \left ( \rho(t) e^{-p_{a}(\xi)\vert t \vert} \right) \right \|_{L^{2}(B_{k})}\\
&\lesssim \sum_{k \geq 0} 2^{k/2} \| \widehat{f}_{j} \|_{L^{2}_{\xi}} \left \| \chi_{A_{j}}(\xi) P_{k}(\rho(t) \exp(-p_{a}(\xi) \vert t \vert))(t) \right \|_{L^{\infty}_{\xi} L^{2}_{t}}.
\end{align*}
It suffices to show that for each each $j \geq 1$ the sum
\begin{equation}
\label{disslinear1}
\sum_{k \geq 0} 2^{k/2} \left \| \chi_{A_{j}}(\xi) P_{k}(\rho(t) \exp(-p_{a}(\xi) \vert t \vert))(t) \right \|_{L^{\infty}_{\xi} L^{2}_{t}},
\end{equation}
is bounded.
We may assume that $k \geq 50$ in the summation. We first decompose the product $\rho(t) \exp(-p_{a}(\xi) \vert t \vert )$ using the following para-product decomposition:
\begin{equation*}
\rho \phi = \sum_{i \geq 0} \big ( (P_{i+1} \rho)(P_{\leq i+1}\phi) + (P_{\leq i}\rho)(P_{i+1} \phi) \big ),
\end{equation*}
where $\rho = \rho(t)$ and $\phi = \exp(-p_{a}(\xi) \vert t \vert)$. We have adopted the notation $P_{\leq k} := \sum_{\ell=0}^{k} P_{\ell}$.
Therefore
\begin{equation*}
P_{k}(\rho \phi) = P_{k} \left ( \sum_{i \geq k - 10} \big ( (P_{i+1} \rho)(P_{\leq i+1}\phi) + (P_{\leq i}\rho)(P_{i+1} \phi) \big ) \right ).
\end{equation*}
We are thus reduced to showing that the following two sums are bounded:
\begin{align*}
I &:= \sum_{k \geq 50} 2^{k/2} \sum_{i \geq k - 10} \left \| \chi_{A_{j}} P_{k} \big ( (P_{i+1} \rho) (P_{\leq i+1} \phi) \big ) \right \|_{L^{\infty}_{\xi} L^{2}_{t}}\\
II &:= \sum_{k \geq 50} 2^{k/2} \sum_{i \geq k - 10} \left \| \chi_{A_{j}} P_{k} \big ( (P_{\leq i} \rho) (P_{i+1} \phi) \big ) \right \|_{L^{\infty}_{\xi} L^{2}_{t}}.
\end{align*}
We estimate $II$ as follows:
\begin{align*}
II &\leq \sum_{k \geq 50} 2^{k/2} \sum_{i \geq k - 10} \| \chi_{A_{j}} P_{i+1} \phi \|_{L^{\infty}_{\xi} L^{2}_{t}} \| P_{\leq i} \rho \|_{L^{\infty}_{t}}\\
&\lesssim \sum_{k \geq 50} \sum_{i \geq k - 10} 2^{(k-i)/2} 2^{i/2} \| P_{i+1} \phi \|_{L^{\infty}_{\xi} L^{2}_{t}}\\
&\lesssim \sum_{i \geq 0} 2^{i/2} \| P_{i + 1} ( \exp(-p_{a}(2^{j}) \vert t \vert) ) \|_{L^{2}_{t}} = \| \exp(-p_{a}(2^{j})\vert t \vert ) \|_{\dot{B}^{1/2}_{2,1}}.
\end{align*}
We recall that the Besov space space $\dot{B}^{1/2}_{2,1}$ has the following scaling structure:
\begin{equation*}
\| f(2^{j} \cdot) \|_{\dot{B}^{1/2}_{2,1}} \sim \| f \|_{\dot{B}^{1/2}_{2,1}}.
\end{equation*}
Since $e^{-\vert t \vert} \in \dot{B}^{1/2}_{2,1}$, the desired result follows. One can estimate $I$ in a similar way.

The proof of \eqref{linear4} proceeds as in the proof of Proposition 4.4 from \cite{MR2514729}. Let
\begin{equation*}
L(F)(t,x) := \chi_{\R_{+}}(t) \rho(t) \int_{0}^{t} W_{2}(t-s) F(s) ds,
\end{equation*}
and define $w(t,x) := W_{1}(-t)F(t,x)$.
Observe that
\begin{align*}
L(F)(t,x) &= \chi_{\R_{+}}(t) \rho(t) \int_{\R} e^{ix \xi} \int_{0}^{t} e^{i(t-s)\xi^{3} - p_{a}(\xi)(t-s)} \widehat{F}(s,\xi) ds d\xi\\
&= \chi_{\R_{+}}(t) \rho(t) \int_{\R} e^{ix \xi} e^{it\xi^{3} - p_{a}(\xi)t} \int_{\R}  \widetilde{w}(\tau,\xi) \int_{0}^{t} e^{p_{a}(\xi)s + is\tau} ds d\tau\\
&= W_{1}(t) \chi_{\R_{+}}(t) \rho(t) \int_{\R^{2}} e^{ix \xi}  \widetilde{w}(\tau,\xi) \left ( \frac{e^{it\tau} - e^{-p_{a}(\xi)t}}{i\tau + p_{a}(\xi)} \right ) d\tau d\xi.
\end{align*}
Define
\begin{equation*}
h(t,\xi) := \rho(t) \int_{\R} \frac{e^{it\tau} - e^{p_{a}(\xi)t}}{i\tau + p_{a}(\xi)} \widetilde{w}(\tau,\xi) d\tau.
\end{equation*}
From the definition of the space $X^{s,1/2,1}$ it suffices to prove
\begin{equation}
\label{inhom1}
\sum_{k \geq 0} 2^{k/2} \| \mathcal{F}_{t}(h)(\tau,\xi) \|_{L^{2}(A_{j} \cap B_{k})} \lesssim \sum_{k \geq 0} 2^{-k/2} \| \widetilde{w}(\tau,\xi) \|_{L^{2}(A_{j} \cap B_{k})}.
\end{equation}
We begin by decomposing $h = h_{1} + h_{2} + h_{3} - h_{4}$, where
\begin{align*}
h_{1}(t,\xi) &:= \rho(t) \int_{\vert \tau \vert \leq 1} \frac{e^{it\tau}-1}{i\tau + p_{a}(\xi)} \widetilde{w}(\tau,\xi) d\tau,\\
h_{2}(t,\xi) &:= \rho(t) \int_{\vert \tau \vert \leq 1} \frac{1 - e^{-p_{a}(\xi)t}}{i\tau + p_{a}(\xi)} \widetilde{w}(\tau,\xi) d\tau,\\
h_{3}(t,\xi) &:= \rho(t) \int_{\vert \tau \vert \geq 1} \frac{e^{it\tau}}{i\tau + p_{a}(\xi)} \widetilde{w}(\tau,\xi) d\tau,\\
h_{4}(t,\xi) &:= \rho(t) \int_{\vert \tau \vert \geq 1} \frac{e^{-p_{a}(\xi)t}}{i\tau + p_{a}(\xi)} \widetilde{w}(\tau,\xi) d\tau.
\end{align*}

\noindent{\textbf{Estimate for $h_{1}$.}} Use a Taylor expansion to see that
\begin{align*}
\sum_{k \geq 0} 2^{k/2} \| \chi_{A_{j}} P_{k}(h_{1}) \|_{L^{2}_{\xi,t}} &\lesssim \sum_{k \geq 0} 2^{k/2} \sum_{n \geq 1} \left \| \chi_{A_{j}} P_{k} \left ( \frac{\rho(t) t^{n}}{n!} \right ) \int_{\vert \tau \vert \leq 1} \frac{\tau^{n}}{i\tau + p_{a}(\xi)} \widetilde{w}(\tau,\xi) d\tau \right \|_{L^{2}_{\xi,t}}\\
&\lesssim \sum_{n \geq 1} \left \| \frac{\rho(t) t^{n}}{n!} \right \|_{B^{1/2}_{2,1}} \left \| \int_{\vert \tau \vert \leq 1} \frac{\vert \tau \vert}{\vert i\tau + p_{a}(\xi) \vert} \chi_{A_{j}}(\xi) \widetilde{w}(\tau,\xi) d\tau \right \|_{L^{2}_{\xi}}\\
&\lesssim \left \| \int_{\vert \tau \vert \leq 1} \frac{\vert \tau \vert}{\vert i\tau + p_{a}(\xi) \vert} \chi_{A_{j}}(\xi) \widetilde{w}(\tau,\xi) d\tau \right \|_{L^{2}_{\xi}}.
\end{align*}
Now we apply H\"{o}lder's inequality in $\tau$ to see that
\begin{align*}
& \left \| \int_{\vert \tau \vert \leq 1} \frac{1}{\vert i\tau + p_{a}(\xi) \vert} \chi_{A_{j}} \widetilde{w}(\tau,\xi) d\tau \right \|_{L^{2}_{\xi}}\\
\leq & \sum_{k \geq 0} \left \| \int_{\vert \tau \vert \leq 1} \frac{\vert \tau \vert}{\vert i\tau + p_{a}(\xi) \vert} \chi_{A_{j}} \chi_{B_{k}} \widetilde{w}(\tau,\xi) d\tau \right \|_{L^{2}_{\xi}}\\
\leq & \sum_{k \geq 0} \left \| \frac{\chi_{A_{j}} \chi_{B_{k}}}{\vert i\tau + p_{a}(\xi) \vert} \right \|_{L^{2}_{\xi,\tau}} \| \chi_{A_{j}} \chi_{B_{k}} \widetilde{w}(\tau,\xi) \|_{L^{2}_{\xi,\tau}}\\
\leq & \sum_{k \geq 0} 2^{-k/2} \| \chi_{A_{j}} \chi_{B_{k}} \widetilde{w} \|_{L^{2}_{\xi,\tau}}.
\end{align*}

\noindent{\textbf{Estimate for $h_{2}$.}} If $p_{a}(\xi) \leq 1$, then we use a Taylor expansion to see that
\begin{align*}
&\sum_{k \geq 0} 2^{k/2} \| \chi_{A_{j}} P_{k}(h_{2}) \|_{L^{2}_{\xi,t}}\\
\lesssim & \sum_{n \geq 1} \sum_{k \geq 0} 2^{k/2} \left \| \chi_{A_{j}} \frac{\vert p_{a}(\xi) \vert}{n!} P_{k}(\rho(t) \vert t \vert^{n}) \int_{\vert \tau \vert \leq 1} \frac{\widetilde{w}(\tau,\xi)}{i\tau + p_{a}(\xi)} d\tau \right \|_{L^{2}_{\xi,t}}\\
\lesssim & \left \| \int_{\vert \tau \vert \leq 1} \frac{\chi_{A_{j}} \widetilde{w}(\tau,\xi)}{\vert i\tau + p_{a}(\xi) \vert} d\tau \right  \|_{L^{2}_{\xi}}\\
\lesssim & \sum_{k \geq 0} \left \| \frac{\chi_{A_{j}} \chi_{B_{k}}}{i\tau + p_{a}(\xi)} \right \|_{L^{\infty}_{\xi}L^{2}_{\tau}} \| \chi_{A_{j}} \chi_{B_{k}} \widetilde{w} \|_{L^{2}_{\tau,\xi}}\\
\lesssim & \sum_{k \geq 0} 2^{-k/2} \| \chi_{A_{j}} \chi_{B_{k}} \widetilde{w}(\tau,\xi) \|_{L^{2}_{\xi,\tau}}.
\end{align*}
On the other hand, if $p_{a}(\xi) \geq 1$, then we proceed as in \eqref{disslinear1} to find that
\begin{align*}
\sum_{k \geq 0} 2^{k/2} \| \chi_{A_{j}} P_{k}(h_{2}) \|_{L^{2}_{\xi,t}}
&\lesssim \sum_{k \geq 0} 2^{k/2} \sup_{\xi \in A_{j}} \| \chi_{A_{j}} P_{k} \big ( \eta(t)(1 - e^{-p_{a}(\xi)t}) \big )\|_{L^{2}_{t}} \int_{\vert \tau \vert \leq 1} \frac{\| \chi_{A_{j}} \widetilde{w}(\tau,\xi) \|_{L^{2}_{\xi}}}{\langle \tau \rangle} d\tau\\
&\lesssim \sum_{k \geq 0} 2^{-k/2} \| \chi_{A_{j}} \chi_{B_{k}} \widetilde{w}(\tau,\xi) \|_{L^{2}_{\xi,\tau}}.
\end{align*}


\noindent{\textbf{Estimate for $h_{3}$.}} Let
\begin{equation*}
g(\tau,\xi) = \frac{\chi_{\vert \tau \vert \geq 1} \widetilde{w}(\tau,\xi)}{i\tau + p_{a}(\xi)}.
\end{equation*}
Observe that
\begin{align*}
2^{k/2} \| \chi_{A_{j}} P_{k}(h_{3}) \|_{L^{2}_{\xi,t}} &\lesssim 2^{k/2} \| \chi_{A_{j}} \chi_{B_{k}} \left ( \widehat{\rho}(\tau) \ast_{\tau} g(\tau,\xi) \right ) \|_{L^{2}_{\tau,\xi}}\\
&\lesssim 2^{k/2} \left \| \| \chi_{A_{j}} \widetilde{w}(\tau,\xi) \|_{L^{2}_{\xi}} \frac{\chi_{B_{k}}}{\langle \tau \rangle} \chi_{\vert \tau \vert \geq 1} \right \|_{L^{2}_{\tau}}\\
&\lesssim 2^{-k/2} \| \chi_{A_{j}} \chi_{B_{k}} \widetilde{w} \|_{L^{2}_{\xi,\tau}},
\end{align*}
from which the desired estimate follows.

\noindent{\textbf{Estimate for $h_{4}$.}} Consider
\begin{align*}
\| \chi_{A_{j}} P_{k}(h_{4}) \|_{L^{2}_{\xi,t}}
&= \left \| P_{k} \left ( \rho(t) \int_{\vert \tau \vert \geq 1} \frac{e^{-p_{a}(\xi)t}}{i\tau + p_{a}(\xi)} \chi_{A_{j}} \widetilde{w}(\tau,\xi) d\tau \right ) \right \|_{L^{2}_{\xi,t}}\\
&\leq \sup_{\xi \in A_{j}} \| P_{k} ( \rho(t) e^{-p_{a}(\xi)t} ) \|_{L^{2}_{t}} \left \| \int_{\vert \tau \vert \geq 1} \frac{\chi_{A_{j}} \widetilde{w}(\tau,\xi)}{i\tau + p_{a}(\xi)} d\tau \right \|_{L^{2}_{\xi}}\\
&\leq \sup_{\xi \in A_{j}} \| P_{k} ( \rho(t) e^{-p_{a}(\xi)t} ) \|_{L^{2}_{t}} \int_{\vert \tau \vert \geq 1} \frac{\| \chi_{A_{j}} \widetilde{w}(\tau,\xi) \|_{L^{2}_{\xi}}}{\vert \tau \vert} d\tau
\end{align*}
Using a Taylor expansion in the case when $j = 0$ as in \eqref{homeq1}, and \eqref{disslinear1} in the case when $j \geq 1$, we have
\begin{equation*}
\sum_{k \geq 0} 2^{k/2} \| \chi_{A_{j}} P_{k}(h_{4}) \|_{L^{2}_{\xi,t}} \lesssim \int_{\vert \tau \vert \geq 1} \frac{\| \chi_{A_{j}} \widetilde{w}(\tau,\xi) \|_{L^{2}_{\xi}}}{\vert \tau \vert} d\tau.
\end{equation*}
Note that
\begin{align*}
\int_{\vert \tau \vert \geq 1} \frac{\| \chi_{A_{j}} \widetilde{w}(\tau,\xi) \|_{L^{2}_{\xi}}}{\vert \tau \vert} d\tau \leq \sum_{k \geq 0} \left ( \int_{\vert \tau \vert \geq 1} \frac{\chi_{B_{k}}}{\vert \tau \vert^{2}} d\tau \right )^{1/2} \| \chi_{A_{j}} \chi_{B_{k}} \widetilde{w} \|_{L^{2}_{\xi,\tau}} \lesssim \sum_{k \geq 0} 2^{-k/2} \| \chi_{A_{j}} \chi_{B_{k}} \widetilde{w} \|_{L^{2}_{\xi,\tau}},
\end{align*}
from which the desired estimate follows.
\end{proof}

\subsection{Bilinear estimate}
To prove \eqref{targetcontrol} we will require the bilinear estimate
\begin{equation*}
\| u_{x} v \|_{X^{1, -1/2, 1}} \lesssim \| u \|_{X^{1,1/2,1}} \| v \|_{X^{1,1/2,1}}.
\end{equation*}
We will prove a more general result:
\begin{equation}
\label{desiredbilinear}
\| u_{x} v \|_{X^{s, -1/2, 1}} \lesssim \| u \|_{X^{s,1/2,1}} \| v \|_{X^{s,1/2,1}},
\end{equation}
provided $s \geq 0$.
Rewriting this in frequency variables we see that \eqref{desiredbilinear} is equivalent to
\begin{equation}
\label{Ftargetcontrol}
\| (\vert \xi_{1} \vert f) \ast g \|_{\widetilde{X}^{s,-1/2,1}} \lesssim \| f \|_{\widetilde{X}^{s, 1/2,1}} \| g \|_{\widetilde{X}^{s, 1/2,1}},
\end{equation}
where the space $\widetilde{X}^{s,1/2,1}$ is as in \eqref{Xs,1/2,1}.
In this direction we first have the following basic bilinear estimates, the proofs of which are given in \cite{MR2501679}.

\begin{lemma}
Suppose that $\supp f, \supp g \subseteq A_{j}$. Then
\begin{equation}
\label{basicbilinear1}
\left \| \vert \xi \vert^{1/4} f \ast g \right \|_{L^{2}_{\tau,\xi}} \lesssim \| f \|_{\widetilde{X}^{0,1/2,1}} \| g \|_{\widetilde{X}^{0,1/2,1}}.
\end{equation}
If
\begin{equation*}
K := \inf \{ \vert \xi_{1} - \xi_{2} \vert \ \vert \ \exists \tau_{1}, \tau_{2} \ \text{such that}\ (\tau_{1}, \xi_{1}) \in \supp f, (\tau_{2}, \xi_{2}) \in \supp g \} > 0,
\end{equation*}
then
\begin{equation}
\label{basicbilinear2}
\left \| \vert \xi \vert^{1/2} f \ast g \right \|_{L^{2}_{\tau,\xi}} \lesssim K^{-1/2} \| f \|_{\widetilde{X}^{0,1/2,1}} \| g \|_{\widetilde{X}^{0,1/2,1}}.
\end{equation}
\end{lemma}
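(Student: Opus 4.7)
The plan is to derive both estimates from the KdV resonance identity
\begin{equation*}
(\tau - \xi^{3}) - (\tau_{1} - \xi_{1}^{3}) - (\tau_{2} - \xi_{2}^{3}) = -3\xi \xi_{1} \xi_{2}, \qquad \xi = \xi_{1} + \xi_{2},\ \tau = \tau_{1} + \tau_{2},
\end{equation*}
combined with the change of variables $\xi_{1} \mapsto \eta := 3\xi\xi_{1}(\xi - \xi_{1})$, whose Jacobian is $3|\xi||\xi_{2} - \xi_{1}|$. The two estimates differ only in what geometric gain this substitution supplies.

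I would first decompose $f = \sum_{k_{1} \geq 0} f_{k_{1}}$ and $g = \sum_{k_{2} \geq 0} g_{k_{2}}$ into modulation shells $f_{k_{1}} := \chi_{B_{k_{1}}} f$, $g_{k_{2}} := \chi_{B_{k_{2}}} g$, and dualize against a test function $h = \sum_{k} h_{k}$ with $h_{k} := \chi_{B_{k}} h$ and $\| h \|_{L^{2}} = 1$. The resonance identity forces
\begin{equation*}
\max(2^{k}, 2^{k_{1}}, 2^{k_{2}}) \gtrsim |\xi \xi_{1} \xi_{2}| \sim 2^{2j}|\xi|
\end{equation*}
on the support of the integrand, and by the symmetry between $f_{k_{1}}$ and $g_{k_{2}}$ only two essentially distinct cases remain: the output modulation $2^{k}$ is dominant, or one of the input modulations is. In each case, Plancherel and the $(\tau_{1}, \xi_{1}) \mapsto (\tau_{1} - \xi_{1}^{3}, \eta)$ change of variables yield the standard bilinear $L^{2}$ convolution bound
\begin{equation*}
\| f_{k_{1}} \ast g_{k_{2}} \|_{L^{2}(A_{j} \cap B_{k})} \lesssim \frac{\| f_{k_{1}} \|_{L^{2}} \| g_{k_{2}} \|_{L^{2}}}{|\xi|^{1/2} |\xi_{1} - \xi_{2}|^{1/2}},
\end{equation*}
up to harmless measure factors coming from the range of $\tau - \xi^{3}$.

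For \eqref{basicbilinear2}, the hypothesis $|\xi_{1} - \xi_{2}| \geq K$ supplies $K^{-1/2}$ directly from the denominator, and the remaining $|\xi|^{1/2}$ pairs against the $|\xi|^{1/2}$ weight on the left-hand side to close the estimate. For \eqref{basicbilinear1} no such separation is available; instead one uses the resonance lower bound together with Bernstein on $A_{j}$ to absorb $|\xi|^{1/4}$: the dominant modulation weight $2^{k_{\max}/2} \gtrsim 2^{j}|\xi|^{1/2}$ contributes a factor $2^{-j}|\xi|^{-1/2}$ which, combined with $|\xi|^{1/4}$ and the $A_{j}$-Bernstein factor, yields a bound summable in the two remaining modulation indices.

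The main obstacle I anticipate is the $\ell^{1}$ structure of the $\widetilde{X}^{0,1/2,1}$ norm in the modulation index $k$, as opposed to the familiar $\ell^{2}$ structure of the standard $X^{s,b}$ spaces. This rules out a straightforward Cauchy-Schwarz in $k_{1}$ and $k_{2}$ and forces one to strictly lose a small positive power in the dominant modulation weight on each dyadic piece, so that the two remaining geometric sums converge. This modulation-by-modulation bookkeeping is precisely where the Besov refinement demands care beyond what the $TT^{*}$ shortcut from $X^{s,1/2+}$ theory provides.
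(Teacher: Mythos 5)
First, note that the paper does not prove this lemma itself; it simply cites Kishimoto \cite{MR2501679}, so your sketch is being measured against the standard argument there. Your overall framework --- dyadic decomposition into modulation shells, Cauchy--Schwarz in the convolution, and the change of variables $\xi_{1} \mapsto \eta = 3\xi\xi_{1}(\xi-\xi_{1})$ with Jacobian $3\vert\xi\vert\vert\xi_{1}-\xi_{2}\vert$ --- is the right one, and it does prove \eqref{basicbilinear2}: for fixed $(\tau,\xi)$ the integration set has measure $\lesssim 2^{k_{1}+k_{2}}(\vert\xi\vert K)^{-1}$, whence $\Vert \vert\xi\vert^{1/2} f_{k_{1}}\ast g_{k_{2}}\Vert_{L^{2}} \lesssim K^{-1/2}2^{k_{1}/2}2^{k_{2}/2}\Vert f_{k_{1}}\Vert_{L^{2}}\Vert g_{k_{2}}\Vert_{L^{2}}$, and the triangle inequality in $k_{1},k_{2}$ closes the estimate. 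Be aware, though, that the factor $2^{(k_{1}+k_{2})/2}$ you dismiss as a ``harmless measure factor'' is exactly what the $\widetilde{X}^{0,1/2,1}$ norms must absorb, and that the $\ell^{1}$-in-$k$ structure you identify as the main obstacle is actually an asset here: since the left-hand sides of both estimates are plain $L^{2}$ (no output modulation weight), the sum over $k_{1},k_{2}$ is a bare triangle inequality with no loss required. The $\ell^{1}$ structure only becomes delicate when the target is $\widetilde{X}^{s,-1/2,1}$, which is not the case in this lemma.

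The genuine gap is in \eqref{basicbilinear1}. Without separation of the supports, $\vert\xi_{1}-\xi_{2}\vert$ can vanish, the Jacobian degenerates, and the bound you display is vacuous near $\xi_{1}=\xi_{2}=\xi/2$; the mechanism you propose to rescue it --- the resonance lower bound $2^{k_{\max}}\gtrsim\vert\xi\xi_{1}\xi_{2}\vert$ combined with ``Bernstein on $A_{j}$'' --- does not work. If $k_{\max}=k$ there is no output weight in \eqref{basicbilinear1} to convert that lower bound into a gain, and Bernstein plays no role in a pure $L^{2}_{\tau,\xi}$ estimate between frequency-space functions. What is actually needed is the sublevel-set bound for the quadratic $\xi_{1}\mapsto 3\xi\xi_{1}(\xi-\xi_{1})$: for any interval $J$ of length $L$ one has $\vert\{\xi_{1}: 3\xi\xi_{1}(\xi-\xi_{1})\in J\}\vert\lesssim (L/\vert\xi\vert)^{1/2}$, uniformly through the critical point $\xi_{1}=\xi/2$. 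Taking $L\sim 2^{\max(k_{1},k_{2})}$ gives $\vert E(\tau,\xi)\vert\lesssim 2^{\min(k_{1},k_{2})}\bigl(2^{\max(k_{1},k_{2})}/\vert\xi\vert\bigr)^{1/2}$, hence $\Vert \vert\xi\vert^{1/4}f_{k_{1}}\ast g_{k_{2}}\Vert_{L^{2}}\lesssim 2^{\min(k_{1},k_{2})/2}2^{\max(k_{1},k_{2})/4}\Vert f_{k_{1}}\Vert_{L^{2}}\Vert g_{k_{2}}\Vert_{L^{2}}\leq 2^{k_{1}/2}2^{k_{2}/2}\Vert f_{k_{1}}\Vert_{L^{2}}\Vert g_{k_{2}}\Vert_{L^{2}}$, and again the triangle inequality finishes. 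No case analysis on $k_{\max}$ and no resonance identity are needed for \eqref{basicbilinear1}.
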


\begin{lemma}
Suppose that $\supp f \subseteq A_{j}$ and let $g$ be an arbitrary test function. For $k \geq 0$ we have
\begin{equation}
\label{basicbilinear3}
\| f \ast g \|_{L^{2}(B_{k})} \lesssim 2^{k/4} \| f \|_{\widetilde{X}^{0,1/2,1}} \| \vert \xi \vert^{-1/4} g \|_{L^{2}_{\tau,\xi}}.
\end{equation}
If $\Omega \subseteq \R^{2}$ satisfies
\begin{equation*}
K := \inf \{ \vert \xi + \xi_{1} \vert \ \vert \ \exists \tau,\tau_{1} \ \text{such that} \ (\tau,\xi) \in \Omega, (\tau_{1},\xi_{1}) \in \supp f \} > 0,
\end{equation*}
then
\begin{equation}
\label{basicbilinear4}
\| f \ast g \|_{L^{2}(\Omega \cap B_{k})} \lesssim 2^{k/2} K^{-1/2} \| f \|_{\widetilde{X}^{0,1/2,1}} \| \vert \xi \vert^{-1/2} g \|_{L^{2}_{\tau,\xi}}.
\end{equation}
\end{lemma}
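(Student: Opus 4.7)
The plan is to follow the classical paradigm for bilinear estimates in Bourgain-type spaces, in the Besov refinement due to Kishimoto \cite{MR2501679}: reduce to a pointwise convolution bound for each triple of modulation scales via Cauchy-Schwarz, then recombine using the $\widetilde{X}^{0,1/2,1}$ structure.

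First I would dualize. To bound $\|f \ast g\|_{L^{2}(B_{k})}$ it suffices to control $|\int h\,(f \ast g)\,d\tau\,d\xi|$ uniformly over nonnegative test functions $h$ supported in $B_{k}$ with $\|h\|_{L^{2}} = 1$. I would dyadically decompose in modulation, writing $f = \sum_{k_{1} \geq 0} f_{k_{1}}$ with $f_{k_{1}} := \chi_{B_{k_{1}}} f$ (still supported in $A_{j}$) and $g = \sum_{k_{2} \geq 0} g_{k_{2}}$. Since $f$ is supported in $A_{j}$, the Besov identity $\sum_{k_{1}} 2^{k_{1}/2} \|f_{k_{1}}\|_{L^{2}} = \|f\|_{\widetilde{X}^{0,1/2,1}}$ will absorb the $k_{1}$-sum at the end.

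The sole input from the dispersion is the KdV resonance identity
\begin{equation*}
(\tau - \xi^{3}) - (\tau_{1} - \xi_{1}^{3}) - \bigl((\tau - \tau_{1}) - (\xi - \xi_{1})^{3}\bigr) = -3\, \xi\, \xi_{1}\, (\xi - \xi_{1}),
\end{equation*}
which forces $\max(2^{k}, 2^{k_{1}}, 2^{k_{2}}) \gtrsim |\xi\, \xi_{1}\, (\xi - \xi_{1})|$ on the effective region of integration. Pointwise Cauchy-Schwarz in $(\tau_{1}, \xi_{1})$ yields $|f_{k_{1}} \ast g_{k_{2}}(\tau, \xi)|^{2} \leq |E_{\tau, \xi}| \cdot (|f_{k_{1}}|^{2} \ast |g_{k_{2}}|^{2})(\tau, \xi)$, where $E_{\tau, \xi}$ is the set of admissible $(\tau_{1}, \xi_{1})$ compatible with the modulation constraints and the support condition $\xi_{1} \in A_{j}$. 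For fixed $\xi_{1}$, the variable $\tau_{1}$ is confined to an interval of length $\sim \min(2^{k_{1}}, 2^{k_{2}})$; the resonance identity additionally traps $\xi_{1}$ near a level curve whose width is governed by a Jacobian. Combining these with Cauchy-Schwarz in $k_{2}$, absorbed against the $|\xi|^{-1/4}$- or $|\xi|^{-1/2}$-weighted $L^{2}$ norm of $g$, the factor $2^{k/4}$ in \eqref{basicbilinear3} emerges in the worst (balanced) case of the measure estimate. For \eqref{basicbilinear4}, the transversality hypothesis $|\xi + \xi_{1}| \geq K$ sharpens the measure estimate on $E_{\tau, \xi}$ by shrinking the admissible $\xi_{1}$-interval, producing the extra factor $K^{-1/2}$ and upgrading the modulation power to $2^{k/2}$ at the cost of the stronger weight $|\xi|^{-1/2}$ on $g$.

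The main obstacle is the case analysis mandated by the resonance identity: one must identify the dominant modulation among $2^{k}, 2^{k_{1}}, 2^{k_{2}}$ and verify that the resulting sums converge and pair correctly against the $\widetilde{X}^{0,1/2,1}$ norm on $f$, a standard but tedious bookkeeping exercise. The most delicate piece is the transversal case \eqref{basicbilinear4}, where the $K^{-1/2}$ gain must be extracted cleanly from the level-set geometry of the cubic resonance, by tracking the correct Jacobian on the relevant curve.
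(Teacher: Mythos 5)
The paper does not prove this lemma at all --- it imports both estimates from Kishimoto \cite{MR2501679} --- so your proposal must stand on its own, and as written it has two structural problems. The first is the decomposition $g = \sum_{k_2} g_{k_2}$. Unlike $f$, the function $g$ is controlled only in a weighted $L^2$ norm with no modulation structure, so after proving a per-piece bound you must recover $\| |\xi|^{-1/4} g \|_{L^2} = ( \sum_{k_2} \| |\xi|^{-1/4} g_{k_2} \|_{L^2}^2 )^{1/2}$ from an $\ell^1$ sum over $k_2$. Your ``Cauchy--Schwarz in $k_2$'' only works if the per-piece constants decay geometrically in $k_2$, and no such decay is available: $g$ may sit entirely at one enormous modulation, and the estimates are uniform in $k_2$, not summable. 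Relatedly, the resonance inequality $\max(2^{k},2^{k_1},2^{k_2}) \gtrsim |\xi\xi_1\xi_2|$ is vacuous here precisely because $k_2$ is unconstrained; it is the engine of the main bilinear estimate \eqref{algebraid}, not of these building blocks. The correct route never touches the modulation of $g$: dualize against $h$ supported in $B_k$ (or $\Omega \cap B_k$), rewrite the pairing as $\int g \cdot (h \ast f_{k_1}^{-})$, and apply Cauchy--Schwarz to the convolution $h \ast f_{k_1}^{-}$, whose two factors \emph{are} modulation-localized (to $2^{k}$ and $2^{k_1}$). The measure of the relevant set is then $\lesssim \min(2^{k},2^{k_1}) \cdot (\max(2^{k},2^{k_1})/|\xi_2|)^{1/2} \leq 2^{k/2} 2^{k_1} |\xi_2|^{-1/2}$, which yields \eqref{basicbilinear3} after summing $\sum_{k_1} 2^{k_1/2} \| f_{k_1} \|_{L^2} = \| f \|_{\widetilde{X}^{0,1/2,1}}$.

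The second problem is specific to \eqref{basicbilinear4}: your Cauchy--Schwarz in $(\tau_1,\xi_1)$ produces the Jacobian $\partial_{\xi_1} [ \xi_1^3 + (\xi - \xi_1)^3 ] = 3\xi(\xi_1 - \xi_2)$, in which the hypothesis $|\xi + \xi_1| \geq K$ simply does not appear; that quantity is the derivative with respect to the \emph{output} variable $\xi$ of the resonance function at fixed $\xi_2$, namely $\partial_{\xi} [ \xi^3 - (\xi - \xi_2)^3 ] = 3\xi_2(\xi + \xi_1)$. So the transversality gain $K^{-1/2}$ can only be extracted in the dual formulation described above, where the level-set estimate is performed in $(\tau,\xi) \in \Omega \cap B_k$ for fixed $(\tau_2,\xi_2)$; there the full product $|E| \lesssim 2^{k} 2^{k_1} / (|\xi_2| K)$ gives exactly $2^{k/2} K^{-1/2}$ against the weight $|\xi|^{-1/2}$ on $g$. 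In short, the ingredients you name (duality, Cauchy--Schwarz, sublevel sets of the cubic) are the right ones, but they are assembled in the wrong variables, and the $k_2$-decomposition would have to be removed for the argument to close.
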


We are now prepared to establish our bilinear estimate.

\begin{proposition}
\label{bilinearestimate}
Suppose that $f,g \in \widetilde{X}^{s,1/2,1}$ with $s \geq 0$. Then
\begin{equation}
\label{bilinear}
\| (\vert \xi_{1} \vert f) \ast g \|_{\widetilde{X}^{s,-1/2,1}} \lesssim \| f \|_{\widetilde{X}^{s,1/2,1}} \| g \|_{\widetilde{X}^{s,1/2,1}}.
\end{equation}
\end{proposition}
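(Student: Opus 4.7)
The plan is to follow the standard dyadic decomposition in frequency and modulation, adapted to the Besov-refined setting, in the style of Kishimoto \cite{MR2501679}. Write $f = \sum_{j_1, k_1} f_{j_1, k_1}$ and $g = \sum_{j_2, k_2} g_{j_2, k_2}$, where $f_{j_1, k_1} := f \, \chi_{A_{j_1} \cap B_{k_1}}$. By the triangle inequality and the definition of the target norm it suffices to bound
\begin{equation*}
\sum_{j, k, j_1, k_1, j_2, k_2} 2^{sj} \, 2^{-k/2} \bigl \| (\vert \xi_1 \vert f_{j_1,k_1}) \ast g_{j_2,k_2} \bigr \|_{L^2(A_j \cap B_k)}
\end{equation*}
in terms of the product $\|f\|_{\widetilde X^{s,1/2,1}} \|g\|_{\widetilde X^{s,1/2,1}}$, which, after a Cauchy--Schwarz argument in $j$, is equivalent to the claimed estimate. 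The essential algebraic input is the KdV resonance identity: on the convolution support, where $\xi = \xi_1 + \xi_2$ and $\tau = \tau_1 + \tau_2$, one has
\begin{equation*}
(\tau - \xi^3) - (\tau_1 - \xi_1^3) - (\tau_2 - \xi_2^3) = -3 \xi \xi_1 \xi_2,
\end{equation*}
so $\max(2^k, 2^{k_1}, 2^{k_2}) \gtrsim \vert \xi \xi_1 \xi_2 \vert$. This is what will compensate the derivative $\vert \xi_1 \vert$ whenever the output frequency is small relative to the inputs.

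I would then partition by frequency interaction: case (HL), $j_1 \geq j_2 + 5$, which forces $j \sim j_1$ and produces a gap $K \sim 2^{j_1}$ between the supports of the two convolution factors; case (LH), symmetric; and case (HH), $\vert j_1 - j_2 \vert \leq 5$, within which one further distinguishes (HH$\to$H) with $j \gtrsim j_1$ from (HH$\to$L) with $j \ll j_1$. In the three non-resonant cases the frequency gap allows invocation of \eqref{basicbilinear2} or its dual \eqref{basicbilinear4}, producing a factor $K^{-1/2} \sim 2^{-\max(j_1,j_2)/2}$. Coupled with the derivative factor $\vert \xi_1 \vert \sim 2^{j_1}$, the inequality $2^{sj} \lesssim 2^{s \max(j_1,j_2)}$ valid for $s \geq 0$, and the modulation weight $2^{-k/2}$, the resulting contribution sums geometrically in $k, k_1, k_2$ and then in the frequency indices, producing no net loss.

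The main obstacle is the (HH$\to$L) case, where $j_1 \sim j_2 \gg j$ and no frequency gap is available. Here the resonance identity forces $\max(k, k_1, k_2) \gtrsim 2 j_1 + j$, and I would subdivide according to which modulation dominates. If $k$ dominates, apply \eqref{basicbilinear1}, which supplies a factor $\vert \xi \vert^{1/4} \sim 2^{j/4}$; then the modulation gain $2^{-k/2} \lesssim 2^{-j_1 - j/2}$ more than cancels the derivative $\vert \xi_1 \vert \sim 2^{j_1}$, and the regularity mismatch $2^{sj} \cdot 2^{-s(j_1 + j_2)}$ is favorable because $s \geq 0$. If $k_1$ (resp.\ $k_2$) dominates, I would dualize the convolution and apply \eqref{basicbilinear3} to the reflected pairing $g_{j_2,k_2} \ast \check h$ against $f_{j_1, k_1}$, where $\check h$ is a unit-$L^2$ test function on $A_j \cap B_k$; the factor $2^{k_1/4}$ produced by \eqref{basicbilinear3} is absorbed by the $2^{k_1/2}$ available from the $\widetilde X^{s,1/2,1}$ weight on $f_{j_1,k_1}$, leaving an extra $2^{-k_1/4} \lesssim 2^{-j_1/2 - j/4}$ to balance the derivative. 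In every subcase the geometric decay in the non-dominant modulations is genuine, which is precisely why the Besov $\ell^1$-in-$k$ structure of $\widetilde X^{s,1/2,1}$ (rather than the $\ell^\infty$-in-$k$ structure of the classical $X^{s,1/2}$) is needed to close the sum at the endpoint $b = 1/2$, yielding \eqref{bilinear}.
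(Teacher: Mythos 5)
Your outline follows the same route as the paper's proof: dyadic decomposition of $f$ and $g$ over the blocks $A_{j_i}\cap B_{k_i}$, the resonance identity giving $\max(2^{k},2^{k_1},2^{k_2})\gtrsim \vert \xi\xi_1\xi_2\vert$, a case split by frequency interaction and by which modulation dominates, and the four block estimates \eqref{basicbilinear1}--\eqref{basicbilinear4}. Most subcases close as you describe. But there is a quantitative gap in the subcase you yourself flag as the crux: high-high$\to$low ($j_1\sim j_2\gg j$) with $k_1$ (or $k_2$) the dominant modulation. Your dualized application of \eqref{basicbilinear3} gives
\begin{equation*}
\| f_{j_1,k_1}\ast g_{j_2,k_2}\|_{L^2(A_j\cap B_k)} \lesssim 2^{k_1/4}\,2^{-j/4}\,\bigl(2^{k_2/2}\|g_{j_2,k_2}\|_{L^2}\bigr)\|f_{j_1,k_1}\|_{L^2},
\end{equation*}
so after paying $2^{k_1/4}$ out of the $2^{k_1/2}$ weight on $f$, the total gain against the derivative $2^{j_1}$ is $2^{-k_1/4}2^{-j/4}\lesssim 2^{-j_1/2}2^{-j/2}$, and a factor $2^{j_1/2}$ survives uncancelled (your phrase ``to balance the derivative'' hides this: $2^{-j_1/2-j/4}$ balances only half of $2^{j_1}$). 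On the diagonal $j_2\sim j_1$ the quantities $\sum_{k_1}2^{sj_1}2^{k_1/2}\|f_{j_1,k_1}\|_{L^2}$ are merely square-summable in $j_1$, so $\sum_{j_1>j}2^{j_1/2}(\cdots)$ diverges for generic $f,g$; the leftover regularity factor $2^{-s(j_1+j_2-j)}$ rescues the sum only for $s>1/2$, not on the full range $s\geq 0$ claimed.

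The repair is the one the paper uses, and it is already in your toolkit: in this subcase the output region $A_j$ and $\supp f_{j_1,k_1}\subseteq A_{j_1}$ satisfy $\inf\vert\xi+\xi_1\vert\gtrsim 2^{j_1}$, so \eqref{basicbilinear4} applies with $K\sim 2^{j_1}$ and supplies the gain $K^{-1/2}\|\vert\xi\vert^{-1/2}g_{j_2,k_2}\|_{L^2}\sim 2^{-j_1/2}2^{-j_2/2}\|g_{j_2,k_2}\|_{L^2}$, i.e.\ a full $2^{-j_1}$ cancelling the derivative; the loss $2^{k/2}$ is paid exactly by the target weight $2^{-k/2}$, and the entire dominant modulation weight $2^{-k_{\max}/2}\lesssim 2^{-j/2-j_1}$ remains available (a fraction of it retained as genuine geometric decay in $k\leq k_{\max}$) to close the sums. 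In short, the frequency separation you invoke only in the non-resonant cases is also the essential ingredient in this resonant one; \eqref{basicbilinear3} alone is too lossy there. The rest of your argument matches the paper's.
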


\begin{proof}
We may divide $f$ and $g$ into components as follows: Define $f_{j_{1}, k_{1}} := \chi_{A_{j_{1}}} \chi_{B_{k_{1}}} f$ and $g_{j_{2}, k_{2}} := \chi_{A_{j_{2}}} \chi_{B_{k_{2}}} g$. We thus have
\begin{equation*}
f = \sum_{j_{1} \geq 0} \sum_{k_{1} \geq 0} f_{j_{1},k_{1}} \qquad \text{and} \qquad g = \sum_{j_{2} \geq 0} \sum_{k_{2} \geq 0} g_{j_{2}, k_{2}}.
\end{equation*}
Our goal is to estimate
\begin{equation}
\label{bilinear1}
\sum_{j \geq 0} 2^{2sj} \left ( \sum_{k \geq 0} \sum_{j_{1} \geq 0} \sum_{k_{1} \geq 0} \sum_{j_{2} \geq 0} \sum_{k_{2} \geq 0} 2^{-k/2} 2^{j_{1}} \| f_{j_{1},k_{1}} \ast g_{j_{2}, k_{2}} \|_{L^{2}(A_{j} \cap B_{k})} \right )^{2};
\end{equation}
indeed we aim to establish an estimate of the form
\begin{equation*}
\text{\eqref{bilinear1}} \lesssim \| f \|_{\widetilde{X}^{s,1/2,1}}^{2} \| g \|_{\widetilde{X}^{s,1/2,1}}^{2}.
\end{equation*}
It suffices to prove \eqref{bilinear1} in the following cases:
\begin{enumerate}
\item[(1)] At least two of $j, j_{1}, j_{2}$ are less than 20.
\item[(2)] $j_{1}, j_{2} \geq 20$ and $j < j_{1} - 10$.
\item[(3)] $j, j_{1} \geq 20$, $\vert j - j_{1} \vert \leq 10$.
\end{enumerate}

To simplify our notation below we let
\begin{equation*}
F_{j_{1},k_{1}} := 2^{j_{1}s} 2^{k_{1}/2} \| f_{j_{1},k_{1}} \|_{L^{2}}, \qquad \text{and} \qquad G_{j_{2},k_{2}} := 2^{j_{2}s} 2^{k_{2}/2} \| g_{j_{2},k_{2}} \|_{L^{2}(\R^{2})}.
\end{equation*}

\smallskip

\noindent{\textbf{Case (1).}} We may assume that $j, j_{1}, j_{2} \leq 30$. We apply Young's inequality followed by H\"{o}lder's inequality to see that
\begin{align*}
\| f_{j_{1}, k_{1}} \ast g_{j_{2}, k_{2}} \|_{L^{2}_{\tau,\xi}} &\leq \| f_{j_{1}, k_{1}} \|_{L^{2}_{\xi_{1}} L^{4/3}_{\tau_{1}}} \| g_{j_{2}, k_{2}} \|_{L^{1}_{\xi_{2}} L^{4/3}_{\tau_{2}}}\\
&\lesssim \left ( 2^{15k_{1}/32} \| f_{j_{1}, k_{1}} \|_{L^{2}_{\xi_{1},\tau_{1}}} \right ) \left ( 2^{j_{2}/2} 2^{15k_{2}/32} \| g_{j_{2}, k_{2}} \|_{L^{2}_{\xi_{2},\tau_{2}}} \right ).
\end{align*}
After summing in $k$ and summing over $j$ (a finite sum), we are thus left with
\begin{align*}
\text{\eqref{bilinear1}} &\lesssim \left ( \sum_{j_{1} = 0}^{30} \sum_{k_{1} \geq 0} 2^{j_{1}} 2^{15k_{1}/32} \| f_{j_{1}, k_{1}} \|_{L^{2}_{\xi_{1}, \tau_{1}}} \right )^{2}  \left ( \sum_{j_{2} = 0}^{30} \sum_{k_{2} \geq 0} 2^{j_{2}/2} 2^{15k_{2}/32} \| g_{j_{2}, k_{2}} \|_{L^{2}_{\xi_{2}, \tau_{2}}} \right )^{2}
\end{align*}
Observe that since the sum in $j_{2}$ is finite, we have
\begin{align*}
&\sum_{j_{2}=0}^{30} \sum_{k_{2} \geq 0} 2^{j_{2}/2} 2^{15k_{2}/32} \| g_{j_{2},k_{2}} \|_{L^{2}_{\tau_{2},\xi_{2}}}\\
= &\sum_{j_{2} = 0}^{30} \sum_{k_{2} \geq 0} 2^{sj_{2}} 2^{(1/2 - s)j_{2}} 2^{15k_{2}/32} \| g_{j_{2},k_{2}} \|_{L^{2}_{\tau_{2},\xi_{2}}}\\
& \leq \left ( \sum_{j_{2} = 0}^{30} 2^{2(1/2 - s)j_{2}} \right )^{1/2}  \left ( \sum_{j_{2} = 0}^{30} \left ( \sum_{k_{2} \geq 0} 2^{sj_{2}} 2^{15k_{2}/32} \| g_{j_{2}, k_{2}} \|_{L^{2}_{\tau_{2},\xi_{2}}} \right )^{2} \right )^{1/2}\\
&\lesssim \| g \|_{\widetilde{X}^{s,1/2,1}}.
\end{align*}
A similar argument can be used to show that
\begin{equation*}
\sum_{j_{1} = 0}^{30} \sum_{k_{1} \geq 0} 2^{j_{1}} 2^{15k_{1}/32} \| f_{j_{1}, k_{1}} \|_{L^{2}_{\xi_{1}, \tau_{1}}} \lesssim \| f \|_{\widetilde{X}^{s,1/2,1}},
\end{equation*}
thereby yielding the desired estimate.

\smallskip

\noindent{\textbf{Case (2).}} Here we may assume that $\vert j_{1} - j_{2} \vert \leq 1$, for otherwise $f_{j_{1}} \ast g_{j_{2}} = 0$ in $A_{j}$. For $(\tau_{1},\xi_{1}) \in A_{j_{1}} \cap B_{k_{1}}$ and $(\tau_{2},\xi_{2}) \in A_{j_{2}} \cap B_{k_{2}}$ we have
\begin{equation}
\label{algebraid}
(\tau_{1} + \tau_{2}) - (\xi_{1} + \xi_{2})^{3} - (\tau_{1} - \xi_{1}^{3}) - (\tau_{2} - \xi_{2}^{3}) = -3\xi \xi_{1} \xi_{2}.
\end{equation}
It follows that $f_{j_{1},k_{1}} \ast g_{j_{2},k_{2}} = 0$ on $A_{j} \cap B_{k}$ unless
\begin{equation*}
2^{k_{max}} \gtrsim 2^{j} 2^{j_{1}} 2^{j_{2}} \sim 2^{j + 2j_{1}},
\end{equation*}
where $k_{max} = \max \{ k, k_{1}, k_{2} \}$.

Suppose that $k = k_{max}$, meaning that $2^{-k/2} \lesssim 2^{-j/2 - j_{1}}$. Notice that in order for $f_{j_{1}} \ast g_{j_{2}}$ to have low frequency support we require that $\xi_{1}$ and $\xi_{2}$ must have opposite signs for $\xi_{1} \in \supp f_{j_{1}}, \xi_{2} \in \supp g_{j_{2}}$. It follows that $\supp f_{j_{1}}$ and $\supp g_{j_{2}}$ are separated by $K \sim 2^{j_{1}}$. In light of \eqref{basicbilinear2} we have
\begin{align*}
2^{j/2} \| f_{j_{1},k_{1}} \ast g_{j_{2},k_{2}} \|_{L^{2}(A_{j} \cap B_{k})} \lesssim 2^{-j_{1}/2} 2^{-j_{1}s} 2^{-j_{2}s} F_{j_{1},k_{1}} G_{j_{2},k_{2}}.
\end{align*}
Thus
\begin{align*}
\text{\eqref{bilinear1}} &\lesssim \sum_{j \geq 0} \left ( \sum_{j_{1} \geq j + 11} \sum_{k_{1} \geq 0} \sum_{j_{2} = j_{1} - 1}^{j_{1} + 1} \sum_{k_{2} \geq 0} 2^{sj - j} 2^{-j_{1}/2 - j_{1}s} 2^{-j_{2}s} F_{j_{1},k_{1}} G_{j_{2},k_{2}} \right )^{2}\\
&\lesssim \sum_{j \geq 0} 2^{-2sj - 3j} \left ( \sum_{j_{1} \geq 0} \sum_{k_{1} \geq 0} \sum_{j_{2} \geq 0} \sum_{k_{2} \geq 0} F_{j_{1},k_{1}} G_{j_{2},k_{2}} \right )^{2}\\
&\lesssim \| f \|_{X^{s,1/2,1}}^{2} \| g \|_{X^{s,1/2,1}}^{2}.
\end{align*}

Next we suppose that $k_{1} = k_{max}$, so $2^{k_{1}} \gtrsim 2^{j + 2j_{1}}$.  We use \eqref{basicbilinear4} with $K \sim 2^{j_{1}}$ to get that
\begin{align*}
\| f_{j_{1},k_{1}} \ast g_{j_{2},k_{2}} \|_{L^{2}(A_{j} \cap B_{k})} &\lesssim 2^{k/2} 2^{-j_{1}} 2^{k_{2}/2} \| f_{j_{1},k_{1}} \|_{L^{2}} \| g_{j_{2},k_{2}} \|_{L^{2}}\\
&\lesssim 2^{k/2} 2^{-j_{1}} 2^{-j_{1}s} 2^{-j_{2}s} 2^{-k_{1}/2} F_{j_{1},k_{1}} G_{j_{2},k_{2}}.
\end{align*}
Therefore
\begin{align*}
\text{\eqref{bilinear1}} \lesssim \sum_{j \geq 0} \left (\sum_{k = 0}^{k_{1}} \sum_{j_{1} \geq j + 11} \sum_{k_{1} \geq j + 2j_{1}} \sum_{j_{2} = j_{1} - 1}^{j_{1} + 1} \sum_{k_{2} = 0}^{k_{1}} 2^{js -k_{1}/2 - j_{1}s - j_{2}} F_{j_{1},k_{1}} G_{j_{2},k_{2}} \right )^{2}.
\end{align*}
Now we estimate
\begin{equation*}
2^{js - k_{1}/2 - j_{1}s - j_{2}s} \lesssim 2^{-js} 2^{-k/16} 2^{-7(j + 2j_{1})/16} \lesssim 2^{-js-21j/16} 2^{-k/16}.
\end{equation*}
It follows that
\begin{equation*}
\text{\eqref{bilinear1}} \lesssim \sum_{j \geq 0} 2^{-js - 21j/16} \left ( \sum_{k \geq 0} \sum_{j_{1} \geq 0} \sum_{k_{1} \geq 0} \sum_{j_{2} \geq 0} \sum_{k_{2} \geq 0} 2^{-k/16} F_{j_{1},k_{1}} G_{j_{2},k_{2}} \right )^{2},
\end{equation*}
and the desired estimate follows.

Finally we suppose that $k_{2} = k_{max}$. Since $\vert j_{1} - j_{2} \vert \leq 1$ we may proceed in the same way as the case when $k_{1} = k_{max}$ to obtain the desired estimate.

\noindent{\textbf{Case (3).}} We may assume that $j_{2} \leq j + 11$. Returning to \eqref{algebraid} we require $2^{k_{max}} \gtrsim 2^{2j + j_{2}}$. We first suppose that $k = k_{max}$. In this case we use \eqref{basicbilinear1} to see that
\begin{align*}
 \| f_{j_{1},k_{1}} \ast g_{j_{2},k_{2}} \|_{L^{2}(A_{j} \cap B_{k})} &\lesssim 2^{-j/4} 2^{k_{1}/2} 2^{k_{2}/2} \| f_{j_{1},k_{1}} \|_{L^{2}} \| g_{j_{2},k_{2}} \|_{L^{2}}\\
 &\lesssim 2^{-j/4} 2^{-j_{1}s} 2^{-j_{2}s} F_{j_{1},k_{1}} G_{j_{2},k_{2}}.
\end{align*}
Therefore
\begin{align*}
\text{\eqref{bilinear1}} &\lesssim \sum_{j \geq 0} \left ( \sum_{k \geq 2j + j_{2}} \sum_{j_{1} = j-10}^{j+10} \sum_{k_{1} = 0}^{k} \sum_{j_{2} = 0} \sum_{k_{2} = 0}^{k} 2^{js} 2^{j_{1}} 2^{-k/2} 2^{-j/4} 2^{-j_{1}s} 2^{-j_{2}s} F_{j_{1},k_{1}} G_{j_{2},k_{2}} \right )^{2}\\
&\lesssim \sum_{j \geq 0} \left ( \sum_{j_{1} = j-10}^{j+10} \sum_{k_{1} = 0}^{k} \sum_{j_{2} = 0}^{j + 11} \sum_{k_{2} = 0}^{k} 2^{js} 2^{j_{1}} 2^{-j - j_{2}/2} 2^{-j/4} 2^{-j_{1}s} 2^{-j_{2}s} F_{j_{1},k_{1}} G_{j_{2},k_{2}} \right )^{2}\\
&\lesssim \sum_{j \geq 0} 2^{-j/2} \left ( \sum_{j_{1} \geq 0} \sum_{k_{1} \geq 0} \sum_{j_{2} \geq 0} \sum_{k_{2} \geq 0} 2^{-j_{2}s - j_{2}/2} F_{j_{1},k_{1}} G_{j_{2},k_{2}} \right )^{2},
\end{align*}
which is sufficient.

Suppose that $k_{max} = k_{1}$, so that $2^{k_{1}} \gtrsim 2^{2j + j_{2}}$. We use \eqref{basicbilinear3} to estimate
\begin{align*}
\| f_{j_{1},k_{1}} \ast g_{j_{2},k_{2}} \|_{L^{2}(A_{j} \cap B_{k})} &\lesssim 2^{k/4} 2^{-j_{1}/4} 2^{k_{2}/2} \| f_{j_{1},k_{1}} \|_{L^{2}} \| g_{j_{2},k_{2}} \|_{L^{2}}\\
&\lesssim 2^{k/4} 2^{-j_{1}/4} 2^{-j_{1}s} 2^{-j_{2}s} 2^{-k_{1}/2} F_{j_{1},k_{1}} G_{j_{2},k_{2}}.
\end{align*}
It follows that
\begin{align*}
\text{\eqref{bilinear1}} &\lesssim \sum_{j \geq 0} \left ( \sum_{k = 0}^{k_{1}} \sum_{j_{1} = j-10}^{j + 10} \sum_{k_{1} \geq 2j + j_{2}} \sum_{j_{2} = 0}^{j+11} \sum_{k_{2} = 0}^{k_{1}} 2^{js} 2^{-k/4} 2^{3j_{1}/4} 2^{-k_{1}/2} 2^{-j_{1}s} 2^{-j_{2}s} F_{j_{1},k_{1}} G_{j_{2},k_{2}} \right )^{2}\\
&\lesssim \sum_{j \geq 0} 2^{-j/4} \left ( \sum_{j_{1} \geq 0} \sum_{k_{1} \geq 0} \sum_{j_{2} \geq 0} \sum_{k_{2} \geq 0} 2^{-j_{2}/2} 2^{-j_{2}s} F_{j_{1},k_{1}} G_{j_{2},k_{2}} \right )^{2},
\end{align*}
which is sufficient.

Finally, suppose that $k_{max} = k_{2}$. Here we divide our analysis into the following two cases:
\begin{enumerate}
\item[(i)] $\vert j_{2} - j \vert \leq 5$;
\item[(ii)] $\vert j_{2} - j \vert > 5$.
\end{enumerate}
In Case (i) we use \eqref{basicbilinear3} as above to see that
\begin{align*}
\| f_{j_{1},k_{1}} \ast g_{j_{2},k_{2}} \|_{L^{2}(A_{j} \cap B_{k})} \lesssim 2^{k/4} 2^{-j_{2}/4} 2^{-j_{1}s} 2^{-j_{2}s} 2^{-k_{2}/2} F_{j_{1},k_{1}} G_{j_{2},k_{2}}.
\end{align*}
We thereby find that
\begin{align*}
\text{\eqref{bilinear1}} &\lesssim \sum_{j \geq 0} \left ( \sum_{k=0}^{k_{2}} \sum_{j_{1} = j - 10}^{j + 10} \sum_{k_{1} = 0}^{k_{2}} \sum_{\substack{j_{2} \geq 0\\ \vert j - j_{2} \vert \leq 5}} \sum_{k_{2} \geq 2j + j_{2}} 2^{js} 2^{j_{1}} 2^{-k/4} 2^{-j_{2}/4} 2^{-k_{2}/2} 2^{-j_{1}s} 2^{-j_{2}s} F_{j_{1},k_{1}} G_{j_{2},k_{2}} \right )^{2}\\
&\lesssim \sum_{j \geq 0} \left ( \sum_{j_{1} = j-10}^{j+10} \sum_{k_{1} \geq 0} \sum_{\substack{j_{2} \geq 0\\ \vert j - j_{2} \vert \leq 5}} \sum_{k_{2} \geq 0} 2^{-j_{2}/4} 2^{j_{1}} 2^{-j} 2^{-j_{2}/2} F_{j_{1},k_{1}} G_{j_{2},k_{2}} \right )^{2}\\
&\lesssim \sum_{j \geq 0} 2^{-3j/4} \left ( \sum_{j_{1} \geq 0} \sum_{k_{1} \geq 0} \sum_{j_{2} \geq 0} \sum_{k_{2} \geq 0} F_{j_{1},k_{1}} G_{j_{2},k_{2}} \right )^{2},
\end{align*}
which is sufficient for our purposes.
In Case (ii) we can use \eqref{basicbilinear4} with $K \sim 2^{j}$ to estimate
\begin{equation*}
\| f_{j_{1},k_{1}} \ast g_{j_{2},k_{2}} \|_{L^{2}(A_{j} \cap B_{k})} \lesssim 2^{k/2} 2^{-j/2} 2^{-j_{2}/2} 2^{-j_{1}s} 2^{-j_{2}s} 2^{-k_{2}/2} F_{j_{1},k_{1}} G_{j_{2},k_{2}}.
\end{equation*}
We use
\begin{equation*}
2^{-k_{2}/2} \lesssim 2^{-k/16} 2^{-7k_{2}/16} \lesssim 2^{-k/16} 2^{-7j/8} 2^{-7j_{2}/17}
\end{equation*}
to see that
\begin{align*}
\text{\eqref{bilinear1}} &\lesssim \sum_{j \geq 0} \left ( \sum_{k=0}^{k_{2}} \sum_{j_{1} = j-10}^{j + 10} \sum_{k_{1} = 0}^{k_{2}} \sum_{\substack{j_{2} \geq 0\\ \vert j_{2} < j - 5}} \sum_{k_{2} \geq 0} 2^{js} 2^{j_{1}} 2^{-11j/8} 2^{j_{1}} 2^{-17j_{2}/16} 2^{-j_{1}s} 2^{-j_{2}s} 2^{-k/16} F_{j_{1},k_{1}} G_{j_{2},k_{2}} \right )^{2}\\
&\lesssim \sum_{j \geq 0} 2^{-3j/4} \left ( \sum_{j_{1} \geq 0} \sum_{k_{1} \geq 0} \sum_{j_{2} \geq 0} \sum_{k_{2} \geq 0} F_{j_{1},k_{1}} G_{j_{2},k_{2}} \right )^{2}.
\end{align*}
This completes the proof of the Proposition.
\end{proof}

\subsection{Local-in-time control of the perturbations}


The purpose of this subsection is to establish estimates of the form \eqref{targetcontrol}. Before stating a proposition to this effect, we note that from the modulation equations \eqref{modulation} we have
\begin{equation}
\label{parametercontrol1}
\vert \dot{c} \vert, \vert \dot{\gamma} \vert \lesssim \| w \|_{L^{\infty}_{t} H^{1}_{x}} \lesssim  \| w \|_{X^{1,1/2,1}}.
\end{equation}
We also require control over $\vert c(t) - c_{0} \vert$, which is obtained by integrating the control on $\dot{c}(t)$:
\begin{align*}
\vert c(t) - c_{0} \vert \leq \int_{0}^{t} \vert \dot{c}(\tau) \vert d\tau \lesssim \int_{0}^{t} \| w(\tau) \|_{H^{1}_{x}} d\tau \lesssim \| w \|_{L^{1}_{t} H^{1}_{x}}.
\end{align*}
Since we have restricted $t \in [0,\delta]$, H{\"o}lder's inequality gives
\begin{equation}
\label{parametercontrol2}
\vert c(t) - c_{0} \vert \leq \delta^{1/2} \| w \|_{L^{2}_{t} H^{1}_{x}} \lesssim \delta^{1/2} \| w \|_{X^{1,1/2,1}_{\delta}}.
\end{equation}

\begin{proposition}
\label{vwlwp}
Let $0 < a < \sqrt{c_{0}/3}$.
There is an $r > 0$ such that the following statement holds: If $v_{0} \in H^{1}(\R)$ satisfies $\| v_{0} \|_{H^{1}} < r$ and $\| w_{0} \|_{H^{1}} < r$ where $w_{0}(x) = e^{ax} v_{0}(x)$, then there is a $\delta > 0$ so that the equations \eqref{vequation} and \eqref{wequationmod} admit solutions $v(t,x),w(t,x)$, respectively, on $[0,\delta]$. Moreover, these solutions satisfy
\begin{equation}
\label{vwlwpestimates}
\| v \|_{X^{1,1/2,1}_{\delta}} \lesssim \| v_{0} \|_{H^{1}}, \qquad \text{and} \qquad \| w \|_{X^{1,1/2,1}_{\delta}} \lesssim \| w_{0} \|_{H^{1}}.
\end{equation}
\end{proposition}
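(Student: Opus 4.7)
My approach is a Picard iteration on the Duhamel formulations of \eqref{vequation} and \eqref{wequationmod}. Define
\begin{equation*}
T_v(v) = \rho_\delta W_1(t) v_0 - \rho_\delta \int_0^t W_1(t-s) N_v(s)\,ds, \qquad T_w(w) = \rho_\delta W_2(t) w_0 + \rho_\delta \int_0^t W_2(t-s) N_w(s)\,ds,
\end{equation*}
where $N_v, N_w$ collect the nonlinear and perturbative terms in \eqref{vequation} and \eqref{wequationmod}. At each iterate, the modulation parameters $(c(t),\gamma(t))$ are obtained by solving the $2 \times 2$ system \eqref{modulation}: when $\|w\|_{L^\infty_t H^1_x}$ is small, the coefficient matrix is a small perturbation of the identity (by the biorthogonality $\langle \zeta_j,\eta_k\rangle = \delta_{jk}$ and the $C^1$ dependence of $\p_y\psi_c,\p_c\psi_c$ on $c$), so the system is uniquely solvable and the resulting parameters obey \eqref{parametercontrol1}--\eqref{parametercontrol2}.

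Lemmas \ref{linearairy} and \ref{lineardissipative} reduce the fixed-point estimate to bounding $N_v$ and $N_w$ in $X^{1,-1/2,1}_\delta$. The main quadratic term $\p_y(v^2)$ falls directly under Proposition \ref{bilinearestimate} with $s = 1$. Writing $e^{ay}\p_y(c-c_0+v^2)e^{-ay}w = (c-c_0)(\p_y-a)w + (\p_y-a)(v^2 w)$ via the identity $v = e^{-ay}w$, the cubic piece $(\p_y-a)(v^2 w)$ is handled by iterating Proposition \ref{bilinearestimate} together with the embedding $X^{1,1/2,1}_\delta \hookrightarrow C^0_t H^1_x$ and the $H^1$-algebra property. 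The forcing $(\dot c \p_c + \dot\gamma \p_y)\psi_c$ (with the $e^{ay}$ factor still exponentially decaying since $a < \sqrt{c_0/3}$) is a Schwartz function of $y$ multiplied by scalars of size $\|w\|_{X^{1,1/2,1}_\delta}$; restricting to $[0,\delta]$ and invoking $L^2_{[0,\delta]} H^1_x \hookrightarrow X^{1,-1/2,1}_\delta$ yields a $\delta^{1/2}$ gain. Finally, $Q$ is bounded on $X^{1,\pm 1/2,1}_\delta$ because it pairs against fixed Schwartz functions $\eta_i,\zeta_i$ from Proposition \ref{spectral}.

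The derivative-carrying perturbations $(c-c_0)\p_y v$, $(c-c_0)(\p_y-a)w$, and $\dot\gamma(\p_y-a)w$ are the most delicate: the bilinear estimate does not apply since one factor is a function of $t$ only, and a direct $L^2_t H^1_x$ bound would lose one spatial derivative off $w$. I would handle these by combining the smallness $|c-c_0| \lesssim \delta^{1/2}\|w\|_{X^{1,1/2,1}_\delta}$ from \eqref{parametercontrol2} and $|\dot\gamma| \lesssim \|w\|_{X^{1,1/2,1}_\delta}$ with a time-localized version of the embedding into $X^{1,-1/2,1}_\delta$, extracting a net $\delta^{\theta}$ gain for some $\theta > 0$. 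Assembling all estimates gives
\begin{equation*}
\|(T_v v, T_w w)\|_{X^{1,1/2,1}_\delta \times X^{1,1/2,1}_\delta} \le C(\|v_0\|_{H^1} + \|w_0\|_{H^1}) + C(r + \delta^\theta) \|(v,w)\|_{X^{1,1/2,1}_\delta \times X^{1,1/2,1}_\delta},
\end{equation*}
together with an analogous Lipschitz bound on differences; choosing $r$ small and then $\delta = \delta(r)$ small produces a unique fixed point satisfying \eqref{vwlwpestimates}. The main technical obstacle is precisely the rigorous treatment of these derivative-carrying perturbative terms: one must balance the $\delta$-gain from the time-localization, the low regularity of the scalar prefactor $\dot\gamma$ (available only through its size bound), and the spatial derivative landing on $w$.
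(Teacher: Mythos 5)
Your overall architecture — Duhamel formulation, solving the $2\times 2$ modulation system when the coefficient matrix is a small perturbation of the identity, the linear estimates of Lemmas \ref{linearairy}--\ref{lineardissipative}, the bilinear estimate of Proposition \ref{bilinearestimate} for $\p_y(v^2)$, and boundedness of the projection via pairing against the fixed Schwartz functions $\eta_i,\zeta_i$ — matches the paper's contraction argument. But there is a genuine gap exactly where you flag the "main technical obstacle": the first-order transport terms $(\dot{\gamma}+c-c_0)\p_y v$ in \eqref{vequation} and $\dot{\gamma}(\p_y-a)w$, $(c-c_0)(\p_y-a)w$ in \eqref{wequationmod}. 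Your proposed fix — extracting a $\delta^{\theta}$ gain from time localization using \eqref{parametercontrol1}--\eqref{parametercontrol2} — cannot close the estimate, because the loss here is a full \emph{spatial} derivative: for $w\in X^{1,1/2,1}_\delta$ one only has $\dot{\gamma}\,\p_y w\in X^{0,\pm 1/2,1}_\delta$, whereas the Duhamel estimate requires control in $X^{1,-1/2,1}_\delta$. Time localization gains powers of $\delta$ by lowering the modulation index $b$, not the regularity index $s$, and a prefactor $\dot{\gamma}(t)$ depending on $t$ alone carries no spatial structure that could be traded for the missing derivative (unlike $\p_y(v^2)$, where Proposition \ref{bilinearestimate} exploits the resonance identity \eqref{algebraid}). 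So the fixed-point inequality you write down does not follow for these terms.

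The paper's resolution is to eliminate these terms \emph{before} setting up the contraction, by renormalizing the spatial variable to travel with the modulated frame: $x\mapsto x-(\gamma-2c_0t+\int_0^t c(s)\,ds)$ for the $v$-equation and $x\mapsto x-(c_0-3a^2+a)t+\int_0^t\dot{c}(s)\,ds-\gamma(t)$ for the $w$-equation. After this change of variables the offending first-order terms are absorbed into the linear flows $W_1,W_2$, and what survives in the nonlinearity is only the zeroth-order remnants $a\dot{\gamma}w$ and $a(c-c_0)w$, which are estimated trivially by $\|\dot{\gamma}\|_{L^\infty_t}\|w\|_{X^{1,-1/2,1}_\delta}$ and $\|c-c_0\|_{L^\infty_t}\|w\|_{X^{1,-1/2,1}_\delta}$ with no derivative loss. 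Without this step (or some substitute for it) your iteration does not converge in $X^{1,1/2,1}_\delta$; with it, the rest of your outline goes through essentially as the paper does it.
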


\begin{proof}
We begin with the equation for $v$, given by \eqref{vequation}. Changing variables $x \mapsto x - (\gamma - 2c_{0}t + \int_{0}^{t} c(s) ds)$ leaves us with
\begin{equation}
\label{vequation1}
\partial_{t} + \partial_{x}^{3}v + (\dot{\gamma} \partial_{x} + \dot{c} \partial_{c}) \psi_{c(t)} + 2 \partial_{x} (\psi_{c_{0}} v ) - \partial_{x}(v^{2}) = 0.
\end{equation}
This can be rewritten as an integral equation using Duhamel's formula:
\begin{equation*}
v(t,x) = W_{1}(t) v_{0}(x) + \int_{0}^{t} W_{1}(t-s) \big ( (\dot{\gamma}\partial_{x} + \dot{c}\partial_{c}) \psi_{c(t)} + 2 \partial_{x}(\psi_{c_{0}}v) - \partial_{x}(v^{2}) \big ) ds.
\end{equation*}
We will show that the operator $\Phi$ given by
\begin{equation}
\label{vduhamel}
\Phi v = \rho(t) W_{1}(t) v_{0} + \rho(t) \int_{0}^{t} W_{1}(t-s) \big ( (\dot{\gamma} \partial_{x} + \dot{c} \partial_{c}) \rho(s) \psi_{c(s)} + 2 \partial_{x}(\rho^{2}(s)\psi_{c_{0}}v) - \partial_{x}(\rho^{2}(s) v^{2}) \big ) ds,
\end{equation}
is a contraction on a ball that is to be chosen momentarily, and where $\eta$ is a smooth cutoff adapted to the time interval $[0,\delta]$. We now estimate
\begin{align*}
\| \Phi v \|_{X^{1,1/2,1}_{\delta}} &\lesssim \| v_{0} \|_{H^{1}} + \left \|  \int_{0}^{t} W_{1}(t-s) \big ( (\dot{\gamma} \partial_{x} + \dot{c} \partial_{c}) \rho \psi_{c}ds \big ) \right \|_{X^{1,1/2,1}_{\delta}}\\
&\quad + \left \| \int_{0}^{t} W_{1}(t-s) \partial_{x}(\rho^{2}\psi_{c_{0}}v)ds \right \|_{X^{1,1/2,1}_{\delta}} + \left \|\int_{0}^{t} W_{1}(t-s) \partial_{x}(\rho^{2} v^{2}) \big ) ds   \right \|_{X^{1,1/2,1}_{\delta}}\\
&=: \| v_{0} \|_{H^{1}_{x}} + (I) + (II) + (III).
\end{align*}
Before proceeding further, observe that as a consequence of the embedding $X^{1,\frac{1}{2} + \epsilon} \hookrightarrow X^{1,1/2,1}$ for any $\epsilon > 0$ and the standard inequality (see \cite{MR2492151}, for instance)
\begin{equation*}
\| u \|_{X^{s,\frac{1}{2} + \epsilon}_{\delta}} \lesssim \delta^{\epsilon} \| u \|_{X^{s,\frac{1}{2} + 2\epsilon}_{\delta}},
\end{equation*}
we have that
\begin{equation*}
\| \psi \|_{X^{1,1/2,1}_{\delta}} \lesssim \delta^{\epsilon},
\end{equation*}
provided $\delta > 0$ and $\epsilon > 0$  chosen sufficiently small.
We estimate $(I),(II),$ and $(III)$ using Proposition \ref{bilinearestimate} along with Lemmas \ref{linearairy} and \ref{lineardissipative}:
\begin{align*}
(I) & \lesssim \left ( \vert \dot{\gamma} \vert + \vert \dot{c} \vert \right ) \| \psi \|_{X^{1,1/2,1}_{\delta}} \lesssim \delta^{\epsilon} \| w \|_{X^{1,1/2,1}_{\delta}};\\
(II) &\lesssim \| \partial_{x} (\psi_{c_{0}} v) \|_{X^{1,-1/2,1}_{\delta}} \lesssim \delta^{\epsilon} \| v \|_{X^{1,1/2,1}_{\delta}};\\
(III) &\lesssim \| \partial_{x} (v^{2}) \|_{X^{1,-1/2,1}_{\delta}} \lesssim \| v \|_{X^{1,1/2,1}_{\delta}}^{2}.
\end{align*}
It follows that
\begin{equation}
\| \Phi v \|_{X^{1,1/2,1}_{\delta}} \lesssim \| v_{0} \|_{H^{1}} + \| w \|_{X^{1,1/2,1}_{\delta}} + \| v \|_{X^{1,1/2,1}_{\delta}} + \| v \|_{X^{1,1/2,1}_{\delta}}^{2}.
\end{equation}


Turning to the $w$-equation \eqref{wequationmod}, we again change variables with an eye toward removing the first-order term: let $x \mapsto x - (c_{0} - 3a^{2} + a)t + \int_{0}^{t} \dot{c}(s) ds - \gamma(t)$. The equation then reads
\begin{align*}
&\partial_{t} w + \partial_{x}^{3} w - 3a \partial_{x}^{2} w + a (c_{0} - a^{2}) w + a \dot{\gamma}w - e^{ax} (\dot{c} \partial_{c} + \dot{\gamma} \partial_{x}) \psi_{c} - e^{ax} \partial_{x}(v^{2}) - a(c-c_{0})w\\
&\qquad + \langle \mathcal{F}(t),\eta_{1} \rangle \zeta_{1} + \langle \mathcal{F}(t),\eta_{2} \rangle \zeta_{2} = 0,
\end{align*}
where
\begin{equation*}
\mathcal{F}(t) = e^{ax} ( \dot{\gamma} \partial_{x} + \dot{c} \partial_{c}) e^{-ax} \psi_{c} - a \dot{\gamma} w + e^{ax} \partial_{x} (v^{2}) - a(c-c_{0})w.
\end{equation*}
We will show that, along with $\Phi$ defined above, the map $\Psi$ defined by
\begin{align*}
\Psi w = \rho(t) W_{2}(t) w_{0} + \rho(t) &\int_{0}^{t} W_{2}(t-s) \big ( 2(\partial_{x} - a) (\rho^{2} \psi_{c}w) + a \rho \dot{\gamma}w - e^{ax} (\dot{c} \partial_{c} + \dot{\gamma}\partial_{x})\rho \psi_{c} \\
& \qquad - e^{ax} \partial_{x} (\rho^{2} v^{2}) - a(c-c_{0})\rho w + \rho \langle \mathcal{F},\eta_{1} \rangle \xi_{1} + \rho \langle \mathcal{F},\eta_{2} \rangle \xi_{2} \big ) ds.
\end{align*}
is a contraction on an appropriately chosen ball in $X^{1,1/2,1}_{\delta}$. We begin by estimating
\begin{align*}
\| \Psi w \|_{X^{1,1/2,1}_{\delta}} &\lesssim  \| w_{0} \|_{H^{1}} + \| (\partial_{x} - a) (\rho^{2} \psi_{c}w) \|_{X^{1,-1/2,1}_{\delta}} + \| \rho \dot{\gamma} w \|_{X^{1,-1/2,1}_{\delta}}\\
&\quad + \| e^{ax} (\dot{c} \partial_{c} + \dot{\gamma} \partial_{x}) \rho \psi_{c} \|_{X^{1,-1/2,1}_{\delta}} + \| e^{ax} \partial_{x} (\rho^{2} v^{2}) \|_{X^{1,-1/2,1}_{\delta}} + \| (c - c_{0}) \rho w\|_{X^{1,-1/2,1}_{\delta}}\\
&\quad + \| \rho \langle \mathcal{F},\eta_{1} \rangle \zeta_{1} \|_{X^{1,-1/2,1}_{\delta}} + \| \rho \langle \mathcal{F},\eta_{2} \rangle \zeta_{2} \|_{X^{1,-1/2,1}_{\delta}}\\
&=: \| w_{0} \|_{H^{s}} + (I) + (II) + (III) + (IV) + (V) + (VI) + (VII).
\end{align*}

To estimate $(I)$ we use the fact that $e^{ax} \partial_{x} e^{-ax} = \partial_{x} - a$ to see that
\begin{align*}
(I) &= \| e^{ax} \partial_{x} e^{-ax} \rho \psi_{c}w \|_{X^{1,-1/2,1}_{\delta}} \lesssim \| (\partial_{x} \psi_{c}) w \|_{X^{1,-1/2,1}_{\delta}} + \| (e^{ax}\psi_{c}) v_{x} \|_{X^{1,-1/2,1}_{\delta}} \\
&\lesssim \delta^{\epsilon} \| w \|_{X^{1,1/2,1}_{\delta}} + \delta^{\epsilon} \| v \|_{X^{1,1/2,1}_{\delta}}.
\end{align*}
Term $(II)$ is estimated easily using \eqref{parametercontrol1}:
\begin{equation*}
(II) \lesssim \| \dot{\gamma} \|_{L^{\infty}_{t}} \| w \|_{X^{1,-1/2,1}_{\delta}} \lesssim \| w \|_{X^{1,1/2,1}_{\delta}}^{2}.
\end{equation*}
Similarly, we see that
\begin{equation*}
(III) \lesssim (\| \dot{c} \|_{L^{\infty}_{t}} + \| \dot{\gamma} \|_{L^{\infty}_{t}}) \| \psi \|_{X^{1,1/2,1}_{\delta}} \lesssim \delta^{\epsilon} \| w \|_{X^{1,1/2,1}_{\delta}}.
\end{equation*}
Recalling that $w = e^{ax}v$, we have
\begin{equation*}
(IV) = 2 \| w v_{x} \|_{X^{1,-1/2,1}_{\delta}} \lesssim \| w \|_{X^{1,1/2,1}_{\delta}} \| v \|_{X^{1,1/2,1}_{\delta}}.
\end{equation*}
To estimate $(V)$ we use \eqref{parametercontrol2} to see that
\begin{equation*}
(V) \leq \| c - c_{0} \|_{L^{\infty}_{t}} \| w \|_{X^{1,-1/2,1}_{\delta}} \lesssim \| w \|_{X^{1,1/2,1}_{\delta}}^{2}.
\end{equation*}
To estimate $(VI)$ and $(VII)$ we require the following lemma.

\begin{lemma}
Let $f$ be a space-time function and let $s \geq 0$. Then
\begin{equation*}
\| \langle f,\eta_{i} \rangle \zeta_{i} \|_{X^{s,-1/2,1}_{\delta}} \lesssim \| f \|_{X^{s,-1/2,1}_{\delta}}, \qquad i = 1,2.
\end{equation*}
\end{lemma}

\begin{proof}
Let $\widetilde{f}_{j} = \chi_{A_{j}} \widetilde{f}$, as before, so that $f = \sum_{j \geq 0} f_{j}$. Then
\begin{align*}
\| \langle f, \eta_{i} \rangle \xi_{i} \|_{X^{s, -1/2,1}_{\delta}} &\leq \sum_{j \geq 0} \| \langle f_{j}, \eta_{i} \rangle \xi_{i} \|_{X^{s, -1/2,1}_{\delta}}\\
&= \sum_{j \geq 0} \left ( \sum_{n \geq 0} 2^{2ns} \left ( \sum_{k \geq 0} 2^{-k/2} \| \langle \widetilde{f}_{j}, \widehat{\eta}_{i} \rangle \widehat{\xi}_{i} \|_{L^{2}(A_{n} \cap B_{k})} \right )^{2} \right )^{1/2}.
\end{align*}
Note that $\langle \widetilde{f}_{j}, \widehat{\eta}_{i} \rangle = \langle \widetilde{f}_{j}, \chi_{A_{j}} \widehat{\eta}_{i} \rangle$, which is a function of $\tau$ only. Here we denote the Fourier transform of $\eta_{i}, \zeta_{i}$ by $\widehat{\eta}_{i}, \widehat{\zeta}_{i}$, respectively, to emphasize that these are functions of the frequency variable $\xi$ only. It follows that
\begin{align*}
\| \langle \widetilde{f}_{j}, \widehat{\eta}_{i} \rangle \widehat{\zeta}_{i} \|_{L^{2}(A_{n} \cap B_{k})}
&= \| \langle \widetilde{f}_{j}, \chi_{A_{j}} \widehat{\eta}_{i} \rangle \|_{L^{2}_{\tau}(B_{k})} \| \widehat{\zeta}_{i} \|_{L^{2}_{\xi}(A_{n})}\\
&\leq \| \chi_{B_{k}} \widetilde{f}_{j} \|_{L^{2}_{\tau,\xi}} \| \widehat{\eta}_{i} \|_{L^{2}(A_{j})} \| \widehat{\zeta}_{i} \|_{L^{2}_{\xi}(A_{n})}.
\end{align*}
It remains to estimate
\begin{equation}
\label{innerprod1}
\sum_{j \geq 0} \left ( \sum_{n \geq 0} 2^{2ns} \left ( \sum_{k \geq 0} 2^{-k/2} \| \widetilde{f} \|_{L^{2}(A_{j} \cap B_{k})} \| \widehat{\eta}_{i} \|_{L^{2}(A_{j})} \| \widehat{\zeta}_{i} \|_{L^{2}(A_{n})} \right )^{2} \right )^{1/2}.
\end{equation}
In the case when $n \leq j$ we have $2^{2ns} \leq 2^{2js}$ so that
\begin{align*}
\text{\eqref{innerprod1}} &\leq \sum_{j \geq 0} \| \widehat{\eta}_{i} \|_{L^{2}(A_{j})} \left ( \sum_{n = 0}^{j} 2^{2js} \| \widehat{\zeta}_{i} \|_{L^{2}(A_{n})}^{2} \left ( \sum_{k \geq 0} 2^{-k/2} \| \widetilde{f} \|_{L^{2}(A_{j} \cap B_{k})} \right )^{2} \right )^{1/2},\\
\intertext{so that after carrying out the sum in $n$ we have}
&\lesssim \sum_{j \geq 0} \| \widehat{\eta}_{i} \|_{L^{2}(A_{j})} \left ( \sum_{k \geq 0} 2^{js} 2^{-k/2} \| \widetilde{f} \|_{L^{2}(A_{j} \cap B_{k})} \right )
\lesssim \left ( \sum_{j \geq 0} \left ( \sum_{k \geq 0} 2^{js} 2^{-k/2} \| \widetilde{f} \|_{L^{2}(A_{j} \cap B_{k})} \right )^{2} \right )^{1/2},
\end{align*}
where in the last line we've used Cauchy-Schwartz and the fact that $\eta_{i}$ is smooth.
If $n \geq j$, then $2^{2ns} = 2^{2ns} 2^{-2js} 2^{2js}$ and we find that
\begin{align*}
\text{\eqref{innerprod1}} &= \sum_{j \geq 0} 2^{-js} \| \widehat{\eta}_{i} \|_{L^{2}(A_{j})} \left ( \sum_{n \geq j} 2^{2ns} \| \widehat{\zeta}_{i} \|_{L^{2}(A_{n})} \left ( \sum_{k \geq 0} 2^{-k/2} \| \widetilde{f} \|_{L^{2}(A_{j} \cap B_{k})} \right )^{2} \right )^{1/2}\\
\intertext{and after summing in $n$ (using that $\xi_{i}$ is smooth),}
&\lesssim \sum_{j \geq 0} (2^{-js} \| \widehat{\eta}_{i} \|_{L^{2}(A_{j})} \left ( \sum_{k \geq 0} 2^{js} 2^{-k/2} \| \widetilde{f} \|_{L^{2}(A_{j} \cap B_{k})} \right ) \lesssim \left ( \sum_{j \geq 0} \left ( \sum_{k \geq 0} 2^{js} 2^{-k/2} \| \widetilde{f} \|_{L^{2}(A_{j} \cap B_{k})} \right )^{2} \right )^{1/2}.
\end{align*}
\end{proof}

Returning to our estimates, we now have that
\begin{equation*}
(VI), (VII) \lesssim \| \mathcal{F} \|_{X^{1,-1/2,1}_{\delta}} \lesssim \| w \|_{X^{1,1/2,1}_{\delta}}^{2} + \delta^{\epsilon} \| w \|_{X^{1,1/2,1}_{\delta}} + \| w \|_{X^{1,1/2,1}_{\delta}} \| v \|_{X^{1,1/2,1}_{\delta}},
\end{equation*}
following the estimates of $(II)$ through $(V)$.
Taken together, these estimates now give
\begin{equation}
\label{Psiestimate}
\| \Psi w \|_{X^{1,1/2,1}_{\delta}} \lesssim \| w_{0} \|_{H^{1}} + \delta^{\epsilon} \| w \|_{X^{1,1/2,1}_{\delta}} + \delta^{\epsilon} \| v \|_{X^{1,1/2,1}_{\delta}} + \| w \|_{X^{1,1/2,1}_{\delta}}^{2} + \| w \|_{X^{1,1/2,1}_{\delta}} \| v \|_{X^{1,1/2,1}_{\delta}}.
\end{equation}

Suppose that $\| v_{0} \|_{H^{s}}, \| w_{0} \|_{H^{s}} < r \ll 1$, and consider
\begin{equation*}
\mathcal{B} = \left \{ v,w \in X^{1,1/2,1}_{\delta} \ \big \vert \  \| v \|_{X^{1,1/2,1}_{\delta}} \leq 2cr, \| w \|_{X^{1,1/2,1}_{\delta}} \leq 2cr \right \}.
\end{equation*}
According to our estimates we have
\begin{align*}
\| \Phi v \|_{X^{s,1/2,1}_{\delta}} &\leq cr + 4c \delta^{\epsilon} r + 4 c^{2} r^{2},\\
\| \Psi w \|_{X^{s,1/2,1}_{\delta}} &\leq cr + 4c \delta^{\epsilon} r + 4c^{2} r^{2}.
\end{align*}
It follows that if $\delta$ and $r$ are chosen sufficiently small, then the maps $\Phi,\Psi:\mathcal{B} \to \mathcal{B}$.


To see that $\Phi,\Psi$ are contractions on $\mathcal{B}$ we let $v_{1},v_{2} \in \mathcal{B}$ with $w_{1}= e^{ax} v_{1} \in \mathcal{B}, w_{2} = e^{ax} v_{2} \in \mathcal{B}$. Associated with these functions are modulation parameters $(\gamma_{1}, c_{1})$ and $(\gamma_{2},c_{2})$ corresponding to $v_{1},v_{2}$, respectively. From the modulation equations we find that
\begin{equation*}
\| \dot{c}_{1} - \dot{c}_{2} \|_{L^{\infty}_{t}} + \| \dot{\gamma}_{1} + \dot{\gamma}_{2} \|_{L^{\infty}_{t}} \leq \| v_{1} - v_{2} \|_{L^{\infty}_{t} H^{1}_{x}} \| w_{1} + w_{2} \|_{L^{\infty}_{t} H^{1}_{x}} + \| v_{1} + v_{2} \|_{L^{\infty}_{t} H^{1}_{x}} \| w_{1} - w_{2} \|_{L^{\infty}_{t} H^{1}_{x}},
\end{equation*}
where we use the notation $L^{\infty}_{t}$ as shorthand for the space $L^{\infty}_{t \in [0,\delta]}$. Thus we have
\begin{align*}
\| \dot{c}_{1} - \dot{c}_{2} \|_{L^{\infty}_{t}} + \| \dot{\gamma}_{1} + \dot{\gamma}_{2} \|_{L^{\infty}_{t}} &\leq c \| v_{1} - v_{2} \|_{X^{1,1/2,1}_{\delta}} \| w_{1} + w_{2} \|_{X^{1,1/2,1}_{\delta}}\\
&\quad + c \| v_{1} + v_{2} \|_{X^{1,1/2,1}_{\delta}} \| w_{1} - w_{2} \|_{X^{1,1/2,1}_{\delta}}\\
&\leq 4c^{2}r \| v_{1} - v_{2} \|_{X^{1,1/2,1}_{\delta}} + 4c^{2}r \| w_{1} - w_{2} \|_{X^{1,1/2,1}_{\delta}}.
\end{align*}
Also,
\begin{align*}
\vert \dot{c}_{1} - \dot{c}_{2} \vert &\leq \int_{0}^{t} \vert \dot{c}_{1}(s) - \dot{c}_{2}(s) \vert ds\\
&\leq \int_{0}^{t} \left ( \| v_{1} - v_{2} \|_{H^{1}_{x}} \| w_{1} + w_{2} \|_{H^{1}_{x}} + \| v_{1} + v_{2} \|_{H^{1}_{x}} \| w_{1} - w_{2} \|_{H^{1}_{x}} \right ) ds\\
&\leq \delta^{1/2} \left ( \| v_{1} - v_{2} \|_{X^{1,1/2,1}_{\delta}} \| w_{1} + w_{2} \|_{X^{1,1/2,1}} + \| v_{1} + v_{2} \|_{X^{1,1/2,1}_{\delta}} \| w_{1} - w_{2} \|_{X^{1,1/2,1}_{\delta}} \right )\\
&\leq 4cr \delta^{1/2} \left ( \| v_{1} - v_{2} \|_{X^{1,1/2,1}_{\delta}} + \| w_{1} - w_{2} \|_{X^{1,1/2,1}_{\delta}} \right ).
\end{align*}
We thus have
\begin{align*}
&\| \Phi(v_{1}) - \Phi(v_{2}) \|_{X^{s,1/2,1}_{\delta}}\\
\leq  &\left \| \rho(t) \int_{0}^{t} W_{1}(t-s) \big ( (\dot{\gamma}_{1} \partial_{x} + \dot{c}_{1} \partial_{c_{1}}) \rho(t) \psi_{c_{1}(t)} - (\dot{\gamma}_{2} \partial_{x} + \dot{c}_{2} \partial_{c_{2}}) \rho(t) \psi_{c_{2}(t)} \big ) ds \right \|_{X^{s,1/2,1}_{\delta}}\\
\ \ + & \left \| \rho(t) \int_{0}^{t} W_{1}(t-s) \partial_{x}\big ( \rho^{2}( \psi_{c_{0}} v_{1} - \psi_{c_{0}} v_{2}) \big ) ds \right \|_{X^{s,1/2,1}_{\delta}}\\
\ \ + & \left \| \rho(t) \int_{0}^{t} W_{1}(t-s) \partial_{x}\big ( \rho^{2} (v_{1}^{2} - v_{2}^{2}) \big ) ds \right \|_{X^{s,1/2,1}_{\delta}}\\
\leq &c \delta^{\epsilon} \left ( 4c^{2}r \| v_{1} - v_{2} \|_{X^{1,1/2,1}_{\delta}} + 4c^{2}r \| w_{1} - w_{2} \|_{X^{1,1/2,1}_{\delta}} \right ) + c \delta^{\epsilon} \| v_{1} - v_{2} \|_{X^{s,1/2,1}_{\delta}}\\
\ \  + &c \| v_{1} + v_{2} \|_{X^{s,1/2,1}_{\delta}} \| v_{1} - v_{2} \|_{X^{s,1/2,1}_{\delta}}\\
\leq & \delta^{\epsilon} (4c^{3}r + c ) \| v_{1} - v_{2} \|_{X^{s,1/2,1}_{\delta}} + 4c^{2}r \| v_{1} - v_{2} \|_{X^{s,1/2,1}_{\delta}} + 4c^{3}r \delta^{\epsilon} \| w_{1} - w_{2} \|_{X^{s,1/2,1}_{\delta}}.
\end{align*}
It follows that if $\delta,r$ are chosen sufficiently small, then
\begin{equation*}
\| \Phi(v_{1}) - \Phi(v_{2}) \|_{X^{s,1/2,1}_{\delta}} \leq \frac{1}{2} \left ( \| v_{1} - v_{2} \|_{X^{s,1/2,1}_{\delta}} + \| w_{1} - w_{2} \|_{X^{s,1/2,1}_{\delta}} \right ),
\end{equation*}
so that $\Phi$ is a contraction on $\mathcal{B}$.

Turning to estimates for $\Psi$, we find similarly that
\begin{align*}
&\| \Psi(w_{1}) - \Psi(w_{2}) \|_{X^{1,1/2,1}_{\delta}}\\ \leq &(\delta^{\epsilon} + 8c^{3} r^{2} + 8c^{2}r + 4cr) \left ( \| w_{1} - w_{2} \|_{X^{1,1/2,1}_{\delta}} + \| v_{1} - v_{2} \|_{X^{1,1/2,1}_{\delta}} \right ) + 2cr \| w_{1} - w_{2} \|_{X^{1,1/2,1}_{\delta}}\\
+ & 2cr \| w_{1} - w_{2} \|_{X^{1,1/2,1}_{\delta}}.
\end{align*}
We conclude that if $\delta,r$ are sufficiently small, the $\Psi$ is a contraction on $\mathcal{B}$. This establishes the local well-posedness for the weighted and unweighted perturbations.
\end{proof}

\section{Iteration}
In this section we wish to gain long-term control on the behavior of the perturbation by iterating the short-term control gained in Section 3, along with some energy and spectral estimates.  Our goal is to show that $v$ remains bounded in $H^1$ for all time, while $w$ enjoys exponential decay in $H^1$ as time grows.  To do this, we will iterate along local well-posedness time intervals and prove the desired bound by induction.  Specifically, we wish to show that, for all $t > 0$ there exist $c(t)$ and $\gamma(t)$ so that
\begin{enumerate}
\item $c(t)$ and $\gamma(t)$ are smooth functions of time,
\item $\dot{c}$ and $\dot{\gamma}$ are uniformly small, and decay exponentially as $t \to \infty$,
\item $c(t) -c_0$ is uniformly small,
\item $\|v(t)\|_{H^1}$ is uniformly small, and
\item $\|w(t)\|_{H^1}$ decays exponentially as $t \to \infty$.
\end{enumerate}

To reach these conclusions, we rely on the modulation equations described above, \eqref{vequation} and \eqref{wequation}.  The first is a result of a now-standard implicit function theorem argument.

We will prove the rest together via the theorem below, which provides an explicit expression for the decay of $\|w\|_{H^1}$ as a function of time, thereby concluding the exponential decay of the perturbation and the asymptotic stability of the weighted perturbation which are our main result.

\begin{theorem} There is an $\epsilon > 0$ so that if $\|w(0)\|_{H^1}+\|v(0)\|_{H^1}+|c(0)-c_0| <\epsilon$ and $v$, $w$, $c,$ and $\gamma$ are as defined above, then there exist $\kappa$ with $0 < \kappa < 1$ and $C_1 >0$ so that, for any $n \in \N$,
\begin{align}
\|w(n\delta)\|_{H^1} & < \kappa^n\epsilon \notag \\
\|v(n\delta)\|_{H^1} & < C_1\epsilon \notag \\
|\dot{c}(n\delta)| & < \kappa^n\epsilon \label{IH} \\
|\dot{\gamma}(n\delta)| & < \kappa^n\epsilon \notag  \\
|c(n\delta)-c_0| & < (2-\kappa^{n-1})\epsilon. \notag
\end{align}
Here, $\delta$ is the local well-posedness time interval found in Proposition \ref{vwlwp} corresponding to an initial condition of size up to $(2+C_1)\epsilon$, and $ C_1$ depends only on $c_0$.
\end{theorem}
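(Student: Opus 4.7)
The plan is to proceed by induction on $n$, using Proposition \ref{vwlwp} to extend the solution across each interval $[n\delta, (n+1)\delta]$ and then promoting the induction hypotheses \eqref{IH} one step at a time. The base case $n=0$ is immediate from the theorem's smallness assumption (taking $C_1 \geq 1$). For the inductive step, the control at $n\delta$ guarantees $\|v(n\delta)\|_{H^1} + \|w(n\delta)\|_{H^1} \leq (1 + C_1)\epsilon \leq (2 + C_1)\epsilon$, so Proposition \ref{vwlwp} produces $v, w \in X^{1,1/2,1}_\delta$ and modulation parameters $c(t), \gamma(t)$ on this interval, satisfying \eqref{vwlwpestimates}.

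The heart of the argument is showing that $\|w\|_{H^1}$ contracts by a fixed factor $\kappa < 1$ over each time step. I would write Duhamel's formula for \eqref{wequationmod} relative to the linear operator $A_{a}$ (frozen at $c=c_{0}$, with all deviations absorbed into the forcing), yielding
$$
w((n+1)\delta) = e^{\delta A_{a}} w(n\delta) + \int_{n\delta}^{(n+1)\delta} e^{((n+1)\delta - s) A_{a}} Q \mathcal{F}(s)\,ds.
$$
Because the modulation choice forces $Qw = w$, the state $w(n\delta)$ lies in the range of $Q$; by Propositions \ref{spectral} and \ref{H1spec}, the spectrum of $A_{a}$ restricted to $Q H^{1}$ lies in $\{\lambda : \mathrm{Re}\,\lambda \leq -a(c_{0}-a^{2})\}$, so that $\|e^{\delta A_{a}} Q f\|_{H^{1}} \leq M e^{-\mu \delta} \|Q f\|_{H^{1}}$ with $\mu := a(c_{0}-a^{2})/2$. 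The forcing $Q\mathcal{F}$ is made of genuinely nonlinear and modulation terms, each of which is quadratic in $(v,w,c-c_{0})$ up to smooth multipliers; Proposition \ref{bilinearestimate} together with \eqref{parametercontrol1}--\eqref{parametercontrol2} bounds its contribution by $C\epsilon \|w\|_{X^{1,1/2,1}_{\delta}} \lesssim \epsilon \|w(n\delta)\|_{H^{1}}$. Combining gives
$$
\|w((n+1)\delta)\|_{H^{1}} \leq \bigl(M e^{-\mu \delta} + C \epsilon \bigr) \|w(n\delta)\|_{H^{1}},
$$
and the parameters $a, \delta, \epsilon$ can be adjusted so that $\kappa := M e^{-\mu \delta} + C\epsilon < 1$, closing the induction for $w$.

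The remaining estimates are consequences. Inverting the near-identity matrix in \eqref{modulation} (which is close to identity because $\|w\|_{H^1}, |c-c_0|$ are small) yields $|\dot{c}(t)| + |\dot{\gamma}(t)| \lesssim \|w(t)\|_{H^{1}}$, which at $t=n\delta$ gives the required $\kappa^{n}\epsilon$ bound. Integrating $\dot c$ and summing the geometric series then produces the stated bound on $|c(n\delta) - c_{0}|$, with constants chosen to match $(2-\kappa^{n-1})\epsilon$. For $\|v(n\delta)\|_{H^{1}}$, exploit the KdV conservation of the $H^{1}$-based energy together with the decomposition $u(x,t) = \psi_{c(t)}(y) + v(y,t)$: since $\|u(t)\|_{H^{1}}$ is time-independent and $\|\psi_{c(t)}\|_{H^{1}}$ depends on $c(t)$ only via an $O(|c(t)-c_{0}|)$ correction, one obtains $\|v(n\delta)\|_{H^{1}} \leq \|v(0)\|_{H^{1}} + C|c(n\delta) - c_{0}| \leq C_{1}\epsilon$ for a $C_{1}$ depending only on $c_{0}$.

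The main obstacle is quantitative: turning the qualitative spectral statement of Propositions \ref{spectral} and \ref{H1spec} into an explicit semigroup bound on $QH^{1}$ with concrete constants $M, \mu$. A resolvent estimate plus the Hille--Yosida framework, applied to the generator $A_{a}$ restricted to the $Q$-invariant complement of its generalized kernel, is what makes the contraction factor $\kappa < 1$ rigorous. Once this dissipative bound is in hand, the rest reduces to bookkeeping: the bilinear estimate controls all nonlinearities, smallness of $\epsilon$ absorbs the constants $C$, and the geometric decay furnished by $\kappa < 1$ ensures that the $v$ and $c-c_{0}$ bounds remain uniform in $n$ through the summed geometric series.
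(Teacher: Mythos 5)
Your strategy for the key decay step differs from the paper's and contains a genuine gap. You propose Duhamel with the full semigroup $e^{tA_a}$ and a bound $\|e^{tA_a}Qf\|_{H^1}\leq Me^{-\mu t}\|Qf\|_{H^1}$, then claim the Duhamel integral is controlled by $C\epsilon\|w(n\delta)\|_{H^1}$ ``by Proposition \ref{bilinearestimate} together with \eqref{parametercontrol1}--\eqref{parametercontrol2}.'' This last step does not go through as stated: the linear and bilinear estimates of Section 3 are built for the constant-coefficient flows $W_1$ and $W_2$ in the spaces $X^{s,\pm 1/2,1}$, not for the variable-coefficient semigroup $e^{tA_a}$, so you cannot feed $Q\mathcal{F}\in X^{1,-1/2,1}$ into your Duhamel formula. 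The alternative of estimating $\int_{n\delta}^{(n+1)\delta}Me^{-\mu(t-s)}\|Q\mathcal{F}(s)\|_{H^1}\,ds$ directly fails because $\mathcal{F}$ contains $e^{ax}\partial_x(v^2)$, whose $H^1$ norm requires $v\in H^2$ pointwise in time --- exactly the derivative loss the Bourgain-space machinery exists to avoid. In addition, the semigroup bound itself does not follow from Propositions \ref{spectral} and \ref{H1spec} alone (spectral location does not control the growth bound of a non-self-adjoint $C_0$-semigroup without resolvent estimates); you flag this but do not supply it, and the paper never proves it.

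The paper avoids both issues by a different mechanism: it computes $\frac{d}{dt}\|w\|_{H^1}^2$ over the interval, uses the coercivity of the quadratic form, $\langle w,A_aQw\rangle_{H^1}\leq -b\|w\|_{H^1}^2$, for the linear part, and estimates the dangerous derivative term $\int_I\int_\R w_y\,\partial_y(wv_y)$ as a space-time pairing via the bilinear estimate \eqref{desiredbilinear} in $X^{0,\pm1/2,1}$ together with Strichartz. Integrating the resulting differential inequality only bounds $N(n)-N(n-1)$ by $-2b\delta\,\inf_J\|w\|_{H^1}^2$ plus errors, so the paper needs an extra case analysis comparing $m(n)=\inf_J\|w\|_{H^1}^2$ with $N(n-1)$ to extract the geometric factor $\kappa$; your approach has no analogue of this because, were the semigroup route available, the contraction would be immediate. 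Two further points: the paper also runs a continuity/bootstrap argument (the quantity $L(t)$ and the time $\delta_1$) to keep all five quantities small throughout each interval before closing the induction, which your outline omits; and your claim that ``$\|u(t)\|_{H^1}$ is time-independent'' is false --- what is conserved is the Lyapunov functional $\mathcal{E}[u]=\int\frac12 u_x^2-\frac13u^3+\frac12 c_0u^2$, and passing from its conservation to $\|v\|_{H^1}\leq C_1\epsilon$ requires the standard coercivity-modulo-kernel argument of Pego--Weinstein, which is what the paper invokes.
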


\begin{proof}  First, let $\epsilon$ be sufficiently small so that, whenever $$\|w(t_0)\|_{H^1} + \|v(t_0)\|_{H^1} + |c(t_0)-c_0| <(2+C_1)\epsilon,$$ it follows that  $v(t)$ exists on $[t_0,t_0+\delta]$, and $\|w\|_{X^{1,b}_{[t_0,t_0+\delta]}} + \|v\|_{X^{1,b}_{[t_0,t_0+\delta]}} < C_0(4+2C_1) \epsilon$, where $C_0$ is the implicit constant in the conclusion of Proposition \ref{vwlwp}.

We wish to prove the claim by induction.  First note that $\dot{c}$ and $\dot{\gamma}$ satisfy the following modulation equations:
$$\begin{bmatrix} \dot{\gamma} \\ \dot{c} \end{bmatrix} = \mathcal{A}\left (\begin{bmatrix} \langle e^{ay}\p_y(c-c_0 + v)e^{-ay}w, \eta_1 \rangle_{L^2}\\ e^{ay}\p_y(c-c_0 + v)e^{-ay}w, \eta_2 \rangle_{L^2}\end{bmatrix}\right ),$$ where $$\mathcal{A} = \left ( \begin{bmatrix} 1 + \langle e^{ay}(\p_y\psi_c-\p_y\psi_{c_0}),\eta_1 \rangle -\langle w, \p_y\eta_1 \rangle & \langle e^{ay}(\p_c\psi_c-\p_c\psi_{c_0}),\eta_1 \rangle \\ \langle  e^{ay}(\p_y\psi_c-\p_y\psi_{c_0}),\eta_2 \rangle-\langle w, \p_y\eta_2 \rangle & 1 + \langle e^{ay}(\p_c\psi_c-\p_c\psi_{c_0}),\eta_2 \rangle \end{bmatrix} \right )^{-1}.$$
At any time when $|c-c_0|$ and $\|w\|_{H^1}$ are sufficiently small, it follows that $\|\mathcal{A}\|\leq 2$, so that
$$\left |\begin{bmatrix} \dot{\gamma} \\ \dot{c} \end{bmatrix}\right | \leq 2 \left |\begin{bmatrix} \langle e^{ay}\p_y(c-c_0 + v)e^{-ay}w, \eta_1 \rangle_{L^2}\\ e^{ay}\p_y(c-c_0 + v)e^{-ay}w, \eta_2 \rangle_{L^2}\end{bmatrix}\right |\leq 2(\max_{i=1,2}\|\eta_i\|_{H^1}) (|c-c_0|+\|v\|_{H^1})\|w\|_{H^1}.$$  Therefore \eqref{IH} is satisfied at $t=0$ because of our assumptions on the initial data, so long as $4(\max_{i=1,2}\|\eta_i\|_{H^1})\epsilon \leq 1$.

Now, assume that \eqref{IH} is satisfied at $t=(n-1)\delta$.  We need to control all $5$ quantities going forward to $t=n\delta$.  Without loss of generality assume $\delta \leq 1$.

Let $\eta$ be a sufficiently small constant satisfying \begin{equation}\label{eta} 0< (20+4C_1)\epsilon< \eta \ll 1.\end{equation}   For convenience, define $$L(t)= \|w(t)\|_{H^1} + \|v(t)\|_{H^1} + |\dot{c}(t)| + |\dot{\gamma}(t)| +
|c(t)-c_0|.$$ Note that by the inductive hypothesis \eqref{IH}, $L((n-1)\delta) < (5+C_1)\epsilon<\eta$.  By continuity, then, there exists $\delta_0 > 0 $ so that $L(t)\leq \eta$ on $[(n-1)\delta,(n-1)\delta+\delta_0]$.  Let $\delta_1$ be the largest such $\delta_0$ which is at most $\delta$.  We will first show that $\delta_1= \delta$.

Let us first estimate $\dot{c}$ and $\dot{\gamma}$ on $I:=[(n-1)\delta,(n-1)\delta+\delta_1]$.  As above, we have that for each $t \in I$, $|\dot{c}|+|\dot{\gamma}|\leq C (|c(t)-c_0|+\|v(t)\|_{H^1})\|w(t)\|_{H^1} \leq C \eta^2,$ which is less than $\frac1{10}\eta$ so long as $\eta$ is sufficiently small.  Then $$|c(t)-c_0| \leq |c((n-1)\delta)-c(0)| + \int_I |\dot{c}(t)|dt  \leq (2-\kappa^{n-2})\epsilon + \frac1{10}\eta\delta_1 \leq \frac15\eta.$$ Next, we estimate $\|v(t)\|_{H^1}$.  This can be done using the Lyapunov functional \\${\mathcal{E}}[u]=\int_{-\infty}^\infty \frac12(\p_xu)^2-\frac13u^3+\frac12c_0u^2$ and considering ${\mathcal{E}}[u(t)]-{\mathcal{E}}[u_{c_0}],$ which is a constant of the evolution.  Exactly as in \cite{MR1289328}, this leads to the conclusion  that, for $\eta$ sufficiently small, $\|v(t)\|_{H^1} < C_1\epsilon<\frac14\eta$ on $I$ for some $C_1$ depending only on $c_0$.

Finally, we estimate $\|w(t)\|_{H^1}$.  Define $M= \|w((n-1)\delta)\|_{H^1}^2$, and $N=\|w((n-1)\delta+\delta_1)\|_{H^1}^2$.  Then we have that
\begin{align*}
N-M & = \int_I\frac{d}{dt}\|w(t)\|_{H^1}^2 dt\\
& = 2 \int_I\langle w, w_t \rangle _{H^1} dt \\
& = 2 \int_I\langle w, A_a w + Q\mathcal{F} \rangle_{H^1}dt \\
& = 2 \int_I \langle w, A_a w \rangle_{H^1} dt +2 \int_I \langle w, Q[e^{ay}(\dot{c}\p_c + \dot{\gamma}\p_y)]\psi_c \rangle_{H^1}dt\\ & \phantom{boogah} -2a(\dot{\gamma} + (c-c_0))\int_I \langle w, Q w \rangle_{H^1}dt + 2 \int_I \langle w, Qe^{ay}\p_y(v^2) \rangle_{H^1} dt \\
& = 2 \int_I\langle w, A_a Q w \rangle_{H^1} dt +2 \int_I \langle w, Q[e^{ay}(\dot{c}\p_c + \dot{\gamma}\p_y)]\psi_c \rangle_{H^1}dt\\ & \phantom{boogah} -2a\int_I(\dot{\gamma} + (c-c_0)) \langle w, Q(\p_y-a) w \rangle_{H^1}dt + 2 \int_I \langle w, Qe^{ay}\p_y(v^2) \rangle_{H^1} dt \\
& = \mbox{ (I) + (II) + (III) + (IV) }
\end{align*}

We may conclude by Proposition \ref{H1spec} that (I) is less than or equal to $-2b\int_I \|w(t)\|_{H^1}^2 dt$.

 For (II), we have (II)$=\int_I\langle w,Q [e^{ay}(\dot{c}\p_c + \dot{\gamma}\p_y)](\psi_c-u_{c_0}+u_{c_0}) ]\rangle_{H^1}$.  Since \\ $Q(e^{ay}\p_cu_{c_0})=Q(e^{ay}\p_y(u_{c_0}))=0$, it follows that $$\mathrm{(II)}=\int_I\langle w,Q [e^{ay}(\dot{c}\p_c + \dot{\gamma}\p_y)](\psi_c-u_{c_0}) ]\rangle_{H^1} \lesssim \int_I[ |\dot{c} + \dot{\gamma}||c-c_0|\|w(t)\|_{H^1}].$$

For (III), consider
\begin{align*}
\langle w, Q(\p_y-a)w \rangle_{H^1} & = \langle w, Q\p_yw \rangle_{H^1} -a\langle w, Qw\rangle_{H^1}\\
&   = \langle w, \p_y w \rangle_{H^1} -\langle w, P\p_yw\rangle_{H^1} -a\langle w, Qw \rangle_{H^1} \\
& = 0 - \langle w, \langle \p_y w, \eta_i \rangle_{L^2} \zeta_i \rangle _{H^1} - a\langle w, Qw \rangle_{H^1}\\
& \leq \|\p_yw\|_{L^2} \|\eta_i\|_{L^2} \|w\|_{H^1}\|\xi_i\|_{H^1}  + a \|w\|_{H^1}^2\\
& \lesssim \|w\|_{H^1}^2.
\end{align*}
Therefore, (III) $\lesssim \int_I (|c-c_0| + |\dot{\gamma}|)\|w(t)\|_{H^1}^2dt$.

Finally, we need to estimate (IV).  To do so, we write
$$\langle w, Qe^{ay}\p_y(v^2) \rangle_{H^1} = \langle w, e^{ay}\p_y(v^2)\rangle_{H^1} - \langle w, Pe^{ay}\p_y(v^2)\rangle_{H^1}.$$
Note that $$P(e^{ay}\p)y(v^2) = \sum_{i=1}^2\langle e^{ay} \p_y(v^2),\eta_i\rangle_{L^2}\xi_i,$$ so
\begin{align*}
\int_I\langle w, P(e^{ay}\p_y(v^2))\rangle_{H^1}dt & = \sum_{i=1}^2 \int_I\langle w, \xi_i\rangle_{H^1}\langle wv_y,\eta_i\rangle_{L^2} dt \\ & \lesssim \sum_{i=1}^2 \int_I[\|w\|_{H^1_x}\|\xi_i\|_{H^1_x}\|w\|_{L^2_x}\|v_y\|_{L^2_x}\|\eta_i\|_{L^\infty_x}]dt \\ & \lesssim \int_I(\|w(t)\|_{H^1}^2dt) \|v\|_{L^\infty_tH^1_x}
\end{align*}
Then we need to estimate $$\int_I\langle w, e^{ay}\p_y(v^2)\rangle_{H^1}dt.$$ This has two terms:
$$\int_I\int_\R w_y\p_y(wv_y)dx dt+ \int_I\int_\R w^2v_ydx dt.$$
We estimate the first term as follows:
\begin{align*}
\int_I\int_\R w_y\p_y(wv_y)dx dt & \lesssim \|w_y\|_{X^{0,\frac{1}{2}}}\|\p_y(wv_y)\|_{X^{0,-\frac{1}{2}}} \\
&\lesssim \| w_{y} \|_{X^{0,1/2,1}} \| \p_y (w v_y) \|_{X^{0,1/2,1}}\\
& \lesssim  \|w\|^2_{X^{1,\frac12,1}}\|v\|_{X^{1,\frac{1}{2},1}}\\
& \lesssim M \|v\|_{X^{1,\frac{1}{2},1}},
\end{align*}
using \eqref{desiredbilinear}. For the second term, we get:
\begin{align*}
| \int_n^{n+1}\int_\R w^2v_ydx dt| & \lesssim \|w\|_{L^6_tL^\infty_x}\|wv_y\|_{L^\frac65_tL^1_x}\\
& \lesssim \|w\|_{L^6_tL^\infty_x}\|w\|_{L^\frac{12}5_tL^2_x}\|v_y\|_{L^\frac{12}5_tL^2_x}\\
& \lesssim  \|w\|_{L^{\infty}_{t}L^\infty_x}\|w\|_{L^\infty_tL^2_x}\|v_y\|_{L^\infty_tL^2_x}\\
& \lesssim  \|w\|_{X^{1,\frac{1}{2},1}}^2\|v_y\|_{X^{1,\frac{1}{2},1}}\\
& \lesssim M\|v\|_{X^{1,\frac{1}{2},1}}
\end{align*}
via Strichartz estimates.

In total, we obtain the following estimate for the increment of $w$:
\begin{align*}
N-M &\leq \int_I[-2b+ C|\dot{\gamma} + c-c_0|+C\|v\|_{L^\infty_tH^1_x}]\|w(t)\|_{H^1_x}^2dt\\
&\quad + \int_I C[ |\dot{c} + \dot{\gamma}||c-c_0|\|w(t)\|_{H^1}]dt+M\|v\|_{X^{1,\frac12}}.
\end{align*}
Using our controls above, this yields
$$N-M \leq (-2b+C\eta)\eta^2\delta+C\eta^2\eta\delta+ C_1M\eta$$
Hence we may conclude that $$N\leq M(1+C_1\eta)+C\eta^3.$$
Therefore, it follows that $N\leq \epsilon^2(1+C_1\eta) + C \eta^3$, so, for $\eta$ sufficiently small,  $\|w((n-1)\delta+\delta_1)\|_{H^1} \leq \frac14\eta$.

Finally, we conclude that $L((n-1)\delta+\delta_1) \leq (\frac{1}{10}+\frac{1}{5}+\frac14+\frac14)\eta< \eta$.  Hence, we may continue past $\delta_1$ with \eqref{eta} remaining valid.  Hence $\delta_1=\delta$.  Therefore, we have that $L(t) \leq \eta$ on $[(n-1)\delta,n\delta]$.  Now, let us verify \eqref{IH} at $t=n\delta$. Set $J:=[(n-1)\delta,n\delta]$.

As above, we have that for each $t \in I$, $$|\dot{c}|+|\dot{\gamma}|\leq C (|c(t)-c_0|+\|v(t)\|_{bH^1})\|w(t)\|_{H^1} \leq C (1+C_1)\eta \|w(t)\|_{H^1},$$ which is less than $\|w(t)\|_{H^1}$ so long as $\eta$ is sufficiently small.  Hence, the control on $\dot{c}$ and $\dot{\gamma}$ is valid whenever the control on $w$ holds.  When it holds, then, $$|c(t)-c_0| \leq |c((n-1)\delta)-c(0)| + \int_J |\dot{c}(t)|dt  \leq (2-\kappa^{n-2})\epsilon + \kappa^{n-1} \leq (2-\kappa^{n-1}) \epsilon.$$  The estimate on $v$ is the same as above, with the same result.

Finally, we estimate $\|w(t)\|_{H^1}$.  Define $m(n)=\inf_{J}\|w(t)\|_{H^1}^2$ and $N(n)=\|w(n\delta)\|_{H^1}^2$.  Then we have that
$$
N(n)-N(n-1) = \int_J\frac{d}{dt}\|w(t)\|_{H^1}^2 dt,
$$
which has the same four terms to be estimated as above.  As above, we obtain the following estimate for the increment of $w$:
\begin{equation*}
\begin{aligned}
N(n)-N(n-1) &\leq  \int_J[-2b+ C|\dot{\gamma} + c-c_0|+C\|v\|_{L^\infty_tH^1_x}]\|w(t)\|_{H^1_x}^2dt\\
 &\quad + \int_J C[ |\dot{c} + \dot{\gamma}||c-c_0|\|w(t)\|_{H^1}]dt+N(n-1)\|v\|_{X^{1,\frac12,1}}.
\end{aligned}
\end{equation*}
Using our controls above, this yields
$$N(n)-N(n-1) \leq \int_I[-2b+C\eta]\|w(t)\|_{H^1_x}^2dt + 2N(n-1)C_1\epsilon,$$
So, for $\eta$ sufficiently small, we have
$$N(n)-N(n-1) \leq -bm(n)+ C\epsilon N(n-1).$$

In order to close the loop, we need to relate $m(n)$ and $N(n-1)$.  There are two possible cases.  First, suppose that $m \geq \frac34 N(n)$.  Then in the above argument we obtain
\begin{equation}\label{largemincrement}N(n)-N(n-1) \leq -\frac34bN(n-1)+ C\epsilon N(n-1)
\end{equation}
On the other hand, if $m(n) < \frac34 N(n-1)$, then $|m(n)-N(n)| > \frac14 N(n-1).$  Let $t^*$ be the time at which the minimum value $m(n)$ occurs.  By the increment calculation above, then, we have that
\begin{align*}
\frac14N(n-1) & < N(n-1)-m(n) \\
& = |\int_{(n-1)\delta}^{t^*}\langle w, A_aw + Q\mathcal{F} \rangle_{H^1} dt|.
\end{align*}
Therefore,
$$|\int_{(n-1)\delta}^{t^*} \langle w, A_aw\rangle{H^1} dt| \geq \frac14N(n-1)-|\int_{(n-1)\delta}^{t^*} \langle w, Q\mathcal{F}\rangle_{H^1} dt|.$$
By the increment calculation above, we then obtain
$$|\int_{n\delta}^{t^*} \langle w, A_aw\rangle{H^1} dt| \geq \frac14N(n-1)-(C\eta N(n-1)+C\epsilon N(n-1)).$$ Since $\langle w, A_aw\rangle_{H^1} \leq 0$ for all $w$ by Proposition \ref{H1spec}, i.e. this quantity has a definite sign, it follows that
$$|\int_J\langle w, A_aw\rangle_{H^1}dt| \geq \frac14N(n-1)-(C(\eta+\epsilon) N(n-1)).$$
Hence in this case,
\begin{equation}\label{smallmincrement}N(n)-N(n-1) \leq -\frac14N(n-1)+2(C(\eta+\epsilon) N(n-1)).
\end{equation}
Hence, in either case, it follows that, with $\beta=\min(\frac34b,\frac1{4})$,
\begin{equation}\label{incrementconclusion}
N(n)-N(n-1) \leq -\beta N(n-1)+C\eta N(n-1),
\end{equation}
So, $N(n) \leq (1+C\eta-\beta)N(n-1)$.  For $\eta$ sufficiently small, it follows that with $\kappa :=1-\frac{\beta}{2}$, $\kappa <1$ and
$N(n) \leq \kappa N(n-1)$.  So, since $N(n-1) \leq \kappa^{n-1}\epsilon,$ $N(n) \leq \kappa^n\epsilon$.  By the arguments above, the corresponding controls on $\dot{c}$, $\dot{\gamma}$, $c-c_0$, and $\|v(n)\|_{H^1}$ immediately follow.  Hence, by induction, the theorem holds.
\end{proof}

\bibliographystyle{amsplain} 
\bibliography{H1references}

\end{document}